\def\thm@space@setup{%
  \thm@preskip=2mm
  \thm@postskip=\thm@preskip % or whatever, if you don't want them to be equal
}
\newtheorem{Theorem}{Theorem}
\newtheorem{Lemma}{Lemma}
\newtheorem{Problem}{Problem}
\newtheorem{Remark}{Remark}
\newtheorem{Corollary}{Corollary}
\newtheorem{Assumption}{Assumption}
\newtheorem{Definition}{Definition}
\DeclareMathOperator*{\argmin}{arg\,min}
\begin{document}

\begin{frontmatter}
%\runtitle{Insert a suggested running title}  % Running title for regular 
                                              % papers but only if the title  
                                              % is over 5 words. Running title 
                                              % is not shown in output.

\title{Fixed-Time Parameter Adaptation for Safe Control Synthesis\thanksref{footnoteinfo}} % Title, preferably not more 
                                                % than 10 words.
                                                
% \title{Fixed-Time Parameter Adaptation for Safety-Critical Control\thanksref{footnoteinfo}} % Title, preferably not more 
                                                % than 10 words.

\thanks[footnoteinfo]{This paper was not presented at any IFAC 
meeting. Corresponding author is Mitchell Black.}

\author[Michigan]{Mitchell Black}\ead{mblackjr@umich.edu},
\author[Ford]{Ehsan Arabi}\ead{earabi@ford.com},
\author[Michigan2]{Dimitra Panagou}\ead{dpanagou@umich.edu}    

\address[Michigan]{Department of Aerospace Engineering, University of Michigan, Ann Arbor, MI 48109, USA}
\address[Ford]{Ford Research and Advanced Engineering, Ford Motor Company, Dearborn, MI 48124, USA}
\address[Michigan2]{Department of Robotics and Department of Aerospace Engineering, University of Michigan, Ann Arbor, MI 48109, USA}

\begin{keyword}                           % Five to ten keywords,  
control of constrained systems; robust control of nonlinear systems; safety-critical systems; fixed-time stability. 
% autonomus systems
% chosen from the IFAC 
\end{keyword}                             % keyword list or with the 
                                          % help of the Automatica 
                                          % keyword wizard

\begin{abstract}                          % Abstract of not more than 200 words.
This paper introduces a parameter adaptation-based control law for a class of nonlinear, control-affine, safety-critical systems subject to additive, parameter-affine model uncertainty. It is shown that the uncertainty is learned in fixed-time, i.e., within a finite time independent of the initial parameter estimates, without requiring persistence of excitation or rank conditions on the associated regressor matrix. A monotonically decreasing bound on the error of the estimated uncertainty is derived and used in the proposed control barrier function-based control law for guaranteed safety of the system trajectories. It is then proven that the parameter adaptation law is robust to bounded measurement noise and non-parametric perturbations to the system in that the estimated uncertainty still converges to a known neighborhood of the true uncertainty in fixed-time. The advantages of the approach are highlighted in a comparative numerical study, and the method is validated on a quadrotor trajectory-tracking problem subject to safety requirements and an unknown wind field.
\end{abstract}

\end{frontmatter}

\section{Introduction}
Synthesizing safe controllers for dynamical systems under uncertainty is challenging. The rise of control barrier functions (CBFs) \cite{ames2017control}, 
a model-based tool for synthesizing safety with stabilization objectives online via quadratic programs (QPs), has advanced the state-of-the art in the field. However, inaccurate system models may cause safety violations in CBF-based control. Typically, model uncertainty is compensated for either by 1) taking a robust approach \cite{xu2015robustness,jankovic2018robustcbf}, assuming a worst-case bound on system perturbations; by 2) using adaptive techniques \cite{Taylor2019aCBF,Lopez2020racbf,Zhao2020robustQP}, updating estimates of unknown system parameters online; or by 3) employing learning algorithms \cite{taylor2020learning,Jagtap2020unknown}, using data to learn a representation of system behavior. Categorically, these approaches possess both strengths and weaknesses: robust controllers protect against bounded perturbations in exchange for conservatism; adaptation-based parameter estimators adjust to changes in the system dynamics online, but often require persistently excited (PE) signals; and while learning algorithms have found empirical success, they often lack guarantees of safety or stabilization. This paper proposes a control framework that leverages the reactivity of adaptation and the protection of robust control to guarantee that 1) a class of additive, parameter-affine perturbations to the system dynamics is learned within a finite time, and 2) the system trajectories remain safe at all times despite model uncertainty. Integral to the proposed framework is the notion of fixed-time stability.

While asymptotic (AS) and exponential (ES) stability imply convergence of the trajectories of a dynamical system to an equilibrium as time tends to infinity, fixed-time stability (FxTS) \cite{polyakov2012nonlinear} guarantees convergence to the equilibrium \textit{within a finite time, independent of the initial conditions}. 
% Specifically, a FxTS equilibrium point is reached within a finite time \textit{independent of the initial conditions}.
Much recent work has been dedicated to FxTS for control design \cite{Parsegov2012Fixed,polyakov2023finite}, estimation \cite{Rios2017TimeVarying,ochoa2021accelerated,tatari2021fixed}, and optimization \cite{poveda2021fixed,garg2020fixed}, though little that utilizes adaptation for control of safety-critical systems. For example, 
% \cite{Na2015robotic} and 
\cite{Tao2022FilteredAdaptive} and \cite{Rios2017TimeVarying} design FxTS parameter estimators for 
% a robotic manipulator system and 
neural networks and perturbed linear regression systems respectively, but control design is not addressed.
% In \cite{Rios2017TimeVarying}, a FxTS parameter estimation law is designed for perturbed linear regression systems, but control design is not considered. 
Recently, the dynamic regressor extension and mixing algorithm has received significant attention in the field of parameter estimation for its promise of monotonicity of each element of the parameter error vector without the often restrictive persistence of excitation condition \cite{Aranovskiy2017DREM,Aranovskiy2019LTI}. As shown in \cite{Aranovskiy2019LTI}, however, it still requires the PE condition for global convergence of the parameter error.
% It is unclear, however, under what conditions DREM actually relaxes the PE condition versus when it is similarly restrictive. 

Meanwhile, existing work on safe control for systems with additive, parameter-affine uncertainty has focused on safety at the exclusion of stabilization. In \cite{Taylor2019aCBF} the adaptive CBF is introduced for system-level safety, not for learning model uncertainty.
This open problem was addressed in \cite{Lopez2020racbf} by combining adaptive CBFs with a data-driven set membership identification (SMID)-based parameter update law, which, though leading to improved empirical results, lacks guarantees of convergence of the parameter estimates. To the best of our knowledge, the first design of a safety-critical fixed-time adaptive control law was proposed in our prior work \cite{Black2021FixedTime}, though the robustness of the approach to measurement error and unmodelled disturbances was not investigated.

%***********************************
%********** Summary of Contributions
% \subsection{Summary of Contributions}

In this paper, a fixed-time adaptive control law is introduced to reduce conservatism in control of nonlinear, control- and parameter-affine safety-critical systems. This is achieved through a novel parameter adaptation law derived to identify the parameter-affine perturbation to the system in fixed-time, notably without requiring the PE condition. The robustness of the proposed adaptation law with respect to bounded measurement error or exogenous perturbations is analyzed, and a fixed-time domain of attraction is derived. Then, the monotonically-decreasing bound on the error of the perturbation estimation is used to synthesize a CBF condition that is both robust to present levels of uncertainty and adaptive to the changing parameter estimates. It is shown how the satisfaction of this condition via control design guarantees safety at all times. The advantages of the proposed method are highlighted in comparison with related works from the literature on a single-integrator example, and are further demonstrated on a simulated quadrotor system in unknown wind fields.

% We introduce a parameter adaptation law for a class of nonlinear, control- and parameter-affine dynamical systems with guarantees that the model uncertainty is learned within a fixed time, independent of the initial conditions, without requiring the PE condition. We specify a monotonically decreasing upper bound on the parameter estimation error, and use it to derive a CBF condition that is both robust to present levels of uncertainty and adaptive with the parameter estimates. We show that the satisfaction of this condition guarantees safety at all times. We use a known condition for FxTS to characterize the robustness of our proposed adaptation law in the presence of bounded measurement error, and show that under our law the model uncertainty is learned within a specified error in fixed-time.
% % that the trajectories of the parameter estimation error are guaranteed to converge to a \textit{neighborhood} of the origin within a fixed-time.
% We use a single-integrator example to highlight the advantages of fixed-time identification for safe control over various approaches from the literature, and further demonstrate the utility of our proposed approach for safe trajectory tracking via simulations of a quadrotor dynamical model in unknown wind fields.

The paper is organized as follows. Section \ref{sec: math prelim} reviews preliminaries and introduces the problem. Section \ref{sec: FxT Parameter Estimation} contains the main results on FxTS parameter adaptation, while Section \ref{sec: Robust Adaptive Safety} analyzes their robustness against a class of bounded perturbations and proposes the new CBF condition. Section \ref{sec: simulation} contains the results of the case studies, and Section \ref{sec: conclusion} concludes with summaries and directions for future work.

%%%%%%%%%%%%%%%%%%%%%%%%%%%%%%%%%%%%%%%%%%%%%%%%%%%%%%%%%%%%%%%%%%%%%%%%%%%%%%%%
%************************* Mathematical Preliminaries *************************%
%%%%%%%%%%%%%%%%%%%%%%%%%%%%%%%%%%%%%%%%%%%%%%%%%%%%%%%%%%%%%%%%%%%%%%%%%%%%%%%%
\section{Mathematical Preliminaries}\label{sec: math prelim}

The following notation is used. $\mathbb R$ denotes the set of real numbers. The ones matrix of size $n \times m$ is denoted $\boldsymbol{1}_{n \times m}$. $\|\cdot\|$ is the Euclidean norm and $\|\cdot\|_{\infty}$ the supremum norm. The minimum, maximum, and r$^{th}$ singular values of a matrix $\boldsymbol{M} \in \mathbb R^{n \times m}$ of column-rank $r \leq m$ are denoted $\sigma_{min}(\boldsymbol{M})$, $\sigma_{max}(\boldsymbol{M})$, and $\sigma_r(\boldsymbol{M})$. Further, $\underaccent{\bar}{\sigma}(\boldsymbol{M},T) \triangleq \inf_{t \leq T}\sigma_r(\boldsymbol{M}(t))$. The rowspace and nullspace are $\mathcal{R}(\boldsymbol{M})$ and $\mathcal{N}(\boldsymbol{M})$ respectively. When $n = m$ the minimum (resp. maximum) eigenvalue is $\lambda_{min}(\boldsymbol{M})$ (resp. $\lambda_{max}(\boldsymbol{M})$). 
% If $\boldsymbol{M} = \boldsymbol{M}(t)$, then $\underaccent{\bar}{\sigma}(\boldsymbol{M},T) \triangleq \inf_{t \leq T}\sigma_r(\boldsymbol{M}(t))$.} 
The boundary of a closed set $S$ is denoted $\partial S$, and $\textrm{int}(S)$ is the interior. The Lie derivative of a continuously differentiable function $V:\mathbb R^n\mapsto \mathbb R$ along a vector field $f:\mathbb R^n\mapsto\mathbb R^n$ at a point $x\in \mathbb R^n$ is denoted by $L_fV(x) \triangleq \frac{\partial V}{\partial x} f(x)$.

Consider the following class of nonlinear, control- and parameter-affine systems:
\begin{equation}\label{uncertain system}
\dot{\boldsymbol{x}} = f(\boldsymbol{x}) + g(\boldsymbol{x})\boldsymbol{u} + \Delta(\boldsymbol{x}) \boldsymbol{\theta}^*, \; \boldsymbol{x}(0) = \boldsymbol{x}_0,
% \dot{\boldsymbol{x}} = f(\boldsymbol{x}(t)) + g(\boldsymbol{x}(t))\boldsymbol{u}(\boldsymbol{x}(t)) + \Delta(\boldsymbol{x}(t)) \boldsymbol{\theta}^*, \; \boldsymbol{x}(0) = \boldsymbol{x}_0,
\end{equation}
where $\boldsymbol{x} \in \mathbb{R}^n$ denotes the state, $\boldsymbol{u} \in \mathcal{U} \subseteq \mathbb{R}^m$ the control input, and $\boldsymbol{\theta}^* \in \Theta \subset \mathbb{R}^p$ a vector of unknown, static parameters. The sets $\mathcal{U}$ and $\Theta$ are the input constraint and admissible parameter sets respectively, where both are known and $\Theta$ is a polytope. The fields $f: \mathbb{R}^n \mapsto \mathbb{R}^n$ and $g: \mathbb{R}^n \mapsto \mathbb{R}^{n \times m}$, and regressor $\Delta : \mathbb{R}^n \mapsto \mathbb{R}^{n \times p}$ are known and continuous such that for a continuous control input \eqref{uncertain system} admits a unique solution. The term $d(\boldsymbol{x}) = \Delta(\boldsymbol{x})\boldsymbol{\theta}^*$ may describe, e.g., modelling errors or disturbances in the system dynamics. Note that nothing is assumed about the rank of $\Delta(\boldsymbol{x})$, and thus there may be other vectors $\boldsymbol{\theta}$ that satisfy $\Delta(\boldsymbol{x})\boldsymbol{\theta}=d(\boldsymbol{x})$. As such, define the following state- and parameter-dependent set, $\Omega(\boldsymbol{x},\boldsymbol{\theta}^*)$, containing such vectors $\boldsymbol{\theta}$:
\begin{equation}\label{set: Omega}
    \Omega(\boldsymbol{x},\boldsymbol{\theta}^*) = \{\boldsymbol{\theta} \in \Theta: \Delta(\boldsymbol{x})\boldsymbol{\theta} = \Delta(\boldsymbol{x})\boldsymbol{\theta}^*\}.
    % \boldsymbol{\theta} = \boldsymbol{\theta}^* + \mathcal{N}(\Delta(\boldsymbol{x}))\}.
\end{equation}
In the remainder, $\Omega$ is used for conciseness. The estimated parameter vector is $\hat{\boldsymbol{\theta}}$ so that the parameter estimation error vector is $\Tilde{\boldsymbol{\theta}} = \boldsymbol{\theta}^* - \hat{\boldsymbol{\theta}}$. Note that $\hat{\boldsymbol{\theta}} \in \Omega$ whenever $\Tilde{\boldsymbol{\theta}} \in \mathcal{N}(\Delta(\boldsymbol{x}))$.
% , where $\boldsymbol{\vartheta} = \vartheta \cdot \boldsymbol{1}_{p\times1}$.
The parameter estimation error dynamics are
\begin{equation}\label{error dynamics}
    \dot{\Tilde{\boldsymbol{\theta}}} = - \dot{\hat{\boldsymbol{\theta}}}, \;\; \Tilde{\boldsymbol{\theta}}(0) = \Tilde{\boldsymbol{\theta}}_0.
\end{equation}
With $\Theta$ a polytope, for any $\hat{\boldsymbol{\theta}} \in \Theta$ it follows that $\|\Tilde{\boldsymbol{\theta}}\|_{\infty} \leq \vartheta(t)$, where
% \begin{equation*}
%     \vartheta(t) \triangleq \sup_{\boldsymbol{\theta}_1,\boldsymbol{\theta}_2 \in \Lambda(t) \subseteq \Theta}(\|\boldsymbol{\theta}_1 - \boldsymbol{\theta}_2\|_{\infty}),
% \end{equation*}
$\vartheta(t) \triangleq \sup_{\boldsymbol{\theta}_1,\boldsymbol{\theta}_2 \in \Lambda(t)}(\|\boldsymbol{\theta}_1 - \boldsymbol{\theta}_2\|_{\infty})$ and $\Lambda(t) \subseteq \Theta$ denotes the set of admissible parameters at time $t$ (which may shrink as measurements are gathered).
% This implies $\|\Tilde{\boldsymbol{\theta}}\|_{\infty} \leq \vartheta$.}
% It is worth further noting that if $\Tilde{\boldsymbol{\theta}} \in \mathcal{N}(\Delta(\boldsymbol{x}))$, then $\Delta(\boldsymbol{x})\boldsymbol{\theta}^* = \Delta(\boldsymbol{x})\hat{\boldsymbol{\theta}}$, which implies that $\hat{\boldsymbol{\theta}} \in \Omega$. 

%*************************
%********** Set Invariance
% \subsection{Set Invariance}
Let $\boldsymbol{\vartheta} = \vartheta \cdot \boldsymbol{1}_{p \times 1}$, 
and consider a set of states defined with respect to a constraint function $h: \mathbb{R}^n \mapsto \mathbb{R}$ and positive-definite robustness margin $m: \mathbb R^p \mapsto \mathbb R_+$, both continuously differentiable:
\begin{equation}\label{eq.safe_set}
    S = \left \{ \boldsymbol{x} \in \mathbb{R}^n : h(\boldsymbol{x}) \geq m(\boldsymbol{\vartheta}) \right \}.
\end{equation}
%         S &= \left \{ \boldsymbol{x} \in \mathbb{R}^n : h(\boldsymbol{x}) \geq m(\boldsymbol{\vartheta}) \right \},\\
%         \partial S &= \left \{ \boldsymbol{x} \in \mathbb{R}^n : h(\boldsymbol{x}) = m(\boldsymbol{\vartheta}) \right \},\\
%         \textrm{int}(S) &= \left \{ \boldsymbol{x} \in \mathbb{R}^n : h(\boldsymbol{x}) > m(\boldsymbol{\vartheta}) \right \}.
%     \end{align}
% \end{subequations}
Let $m(\boldsymbol{\vartheta}) \triangleq \frac{1}{2} \boldsymbol{\vartheta}^T \boldsymbol\Gamma ^{-1} \boldsymbol{\vartheta}$ for a constant, positive-definite matrix $\boldsymbol{\Gamma}$ such that $h(\boldsymbol{x}_0) \geq m(\boldsymbol{\vartheta})$, and assume that $\frac{\partial h}{\partial \boldsymbol{x}} \neq 0$, $\forall \boldsymbol{x} \in \partial S$. The following, known as Nagumo's Theorem \cite{BLANCHINI1999SetInvariance}, provides a necessary and sufficient condition for the forward invariance of the set \eqref{eq.safe_set} under the dynamics \eqref{uncertain system}.
\begin{Lemma}
    % Suppose that the closed-loop trajectories of \eqref{uncertain system} are uniquely determined in forward time. 
    The set $S$ is forward-invariant for \eqref{uncertain system} if and only if $L_fh(\boldsymbol{x}) + L_gh(\boldsymbol{x})\boldsymbol{u} + L_{\Delta}h(\boldsymbol{x})\boldsymbol{\theta}^* - \dot m(\boldsymbol{\vartheta})\geq 0$ for all $\boldsymbol{x} \in \partial S$.
    % :
    % \begin{equation}\label{eq: controlled forward invariance}
    %     .
    % \end{equation}
\end{Lemma}
% \begin{Lemma}
% Let a unique closed-loop solution to \eqref{uncertain system} exist in forward time. The set $S$ can be \textit{rendered} forward-invariant if and only if there exists a control $u\in U$ such that
%     \begin{equation}\label{eq: controlled forward invariance}
%         \sup_{u \in U} \{L_fh(\boldsymbol{x}) + L_gh(\boldsymbol{x})\boldsymbol{u} + L_{\Delta}h(\boldsymbol{x})\boldsymbol{\theta}^*\} \geq 0, \forall \boldsymbol{x} \in \partial S.
%     \end{equation}
% \end{Lemma}
Let $F(\boldsymbol{x}) = f(\boldsymbol{x}) + \Delta(\boldsymbol{x})\boldsymbol{\theta}^*$. 
One way to render $S$ forward-invariant is to use CBFs for control design \cite{ames2017control}.
\begin{Definition}\label{def: cbf}
    Given a set $S \subset \mathbb R^n$ defined by \eqref{eq.safe_set} for a continuously differentiable function $h: \mathbb{R}^n \mapsto \mathbb R$, the function $h$ is a \textbf{control barrier function} (CBF) defined on a set $D$, where $S \subseteq D \subset \mathbb R^n$, if there exists an extended class $\mathcal{K}_\infty$ function $\alpha: \mathbb R \mapsto \mathbb R$ such that, $\forall \boldsymbol{x}\in D$,
    \begin{equation}\label{eq: cbf condition}
        \sup_{\boldsymbol{u} \in \mathcal{U}}\big[L_Fh(\boldsymbol{x}) + L_gh(\boldsymbol{x})\boldsymbol{u}\big] \geq -\alpha\big(h(\boldsymbol{x}) - m(\boldsymbol{\vartheta})\big) + \dot m(\boldsymbol{\vartheta}).
    \end{equation}
    % holds for all $\boldsymbol{x} \in D$.
\end{Definition}
Note that if $\dot m({\boldsymbol{\vartheta}}) = 0$ and $\boldsymbol{\vartheta} \neq 0$, then \eqref{eq: cbf condition} becomes $\dot{h} \geq -\alpha(h) + \gamma$ for $\gamma > 0$, a robust CBF condition \cite{jankovic2018robustcbf}.

%*******************************************
%********** Finite- and Fixed-Time Stability
% \subsection{Finite- and Fixed-Time Stability}

Consider now the autonomous, nonlinear system
\begin{equation}\label{nonlinear system}
    \begin{aligned}
        \dot{\boldsymbol{x}} &= f(\boldsymbol{x}(t)), \quad \boldsymbol{x}(0) = \boldsymbol{x}_0,
    \end{aligned}
\end{equation}
where $\boldsymbol{x} \in \mathbb{R}^n$, $f:\mathbb{R}^n \mapsto \mathbb{R}^n$ is continuous such that \eqref{nonlinear system} admits a unique solution, and it is assumed without loss of generality that the origin is an equilibrium, i.e. $f(0) = 0$. 
\begin{Definition}\cite[Definition 2]{polyakov2012nonlinear}\label{FxTS Def}
    The origin of \eqref{nonlinear system} is \textbf{fixed-time stable} (FxTS) if it is
    stable in the sense of Lyapunov and any solution $\boldsymbol{x}(t, \boldsymbol{x}_0)$ reaches the origin with uniformly bounded settling-time $T(\boldsymbol{x}_0) \leq T_b < \infty$, $\forall \boldsymbol{x}_0 \in \mathbb R^n$.
\end{Definition}
\begin{Theorem}\cite[Lemma 1]{polyakov2012nonlinear}\label{Thm: FxTS Lyapunov Conditions} Suppose that there exists a continuous, positive definite, radially unbounded function $V: \mathbb{R}^n \mapsto \mathbb{R}_+$ for the system \eqref{nonlinear system} such that
\begin{align}\label{eq: FxTS Lyapunov conditions}
    \dot{V}(\boldsymbol{x}) \leq -c_1V^{\gamma_1}(\boldsymbol{x}) - c_2V^{\gamma_2}(\boldsymbol{x}),
\end{align}
for all $\boldsymbol{x} \in \mathbb R^n \setminus \{0\}$, where $c_1,c_2>0$, $\gamma_1>1$, and $0<\gamma_2<1$. Then, the origin of \eqref{nonlinear system} is fixed-time stable with settling time, $T(\boldsymbol{x}_0)$, bounded for all $\boldsymbol{x}_0 \in \mathbb R^n$ by
\begin{align}\label{eq: FxTS Convergence Time}
    T(\boldsymbol{x}_0) \leq T_b = \frac{1}{c_1(\gamma_1-1)} + \frac{1}{c_2(1-\gamma_2)}.
\end{align}
\end{Theorem}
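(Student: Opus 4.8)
The plan is to reduce the multidimensional problem to a scalar comparison argument. First I would observe that along any solution $\boldsymbol{x}(t,\boldsymbol{x}_0)$ with $\boldsymbol{x}_0 \neq 0$, the function $v(t) \coloneqq V(\boldsymbol{x}(t,\boldsymbol{x}_0))$ satisfies the differential inequality $\dot v \leq -c_1 v^{\gamma_1} - c_2 v^{\gamma_2}$ as long as $v(t) > 0$. Since $V$ is positive definite and radially unbounded and the bound forces $\dot V < 0$ off the origin, Lyapunov stability of the origin follows immediately; the remaining content is the finite, uniform settling-time bound. By the comparison lemma it suffices to bound the settling time of the scalar initial value problem $\dot y = -c_1 y^{\gamma_1} - c_2 y^{\gamma_2}$, $y(0) = v(0) = V(\boldsymbol{x}_0) > 0$, because $v(t) \leq y(t)$ and $y$ reaching $0$ at time $T^\star$ forces $v(T^\star) = 0$, hence $V(\boldsymbol{x}(T^\star)) = 0$, hence $\boldsymbol{x}(T^\star) = 0$; thereafter $\boldsymbol{x}$ stays at the equilibrium by uniqueness.

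Next I would estimate $T^\star$ by separation of variables: on the interval where $y > 0$,
\begin{equation*}
  T^\star = \int_0^{y(0)} \frac{dy}{c_1 y^{\gamma_1} + c_2 y^{\gamma_2}}.
\end{equation*}
The key trick is to split this integral at $y = 1$ and to drop one of the two positive terms in the denominator on each piece so as to get an integrand that integrates to something independent of $y(0)$. On $(0,1)$ we have $y^{\gamma_1} \le y^{\gamma_2}$ is false in general — rather, for $y \le 1$ one bounds $c_1 y^{\gamma_1} + c_2 y^{\gamma_2} \ge c_2 y^{\gamma_2}$, giving $\int_0^{\min(y(0),1)} \frac{dy}{c_2 y^{\gamma_2}} = \frac{1}{c_2(1-\gamma_2)}\,[\min(y(0),1)]^{1-\gamma_2} \le \frac{1}{c_2(1-\gamma_2)}$ since $1-\gamma_2 > 0$. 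On $(1,\infty)$ one bounds the denominator below by $c_1 y^{\gamma_1}$, giving $\int_1^{\max(y(0),1)} \frac{dy}{c_1 y^{\gamma_1}} = \frac{1}{c_1(\gamma_1-1)}\bigl(1 - [\max(y(0),1)]^{1-\gamma_1}\bigr) \le \frac{1}{c_1(\gamma_1-1)}$ since $\gamma_1 - 1 > 0$ makes the subtracted term nonnegative. Adding the two pieces (and noting that if $y(0) \le 1$ the second integral is empty, while if $y(0) \ge 1$ the first is the full $(0,1)$ integral) yields $T^\star \le \frac{1}{c_1(\gamma_1-1)} + \frac{1}{c_2(1-\gamma_2)} = T_b$, which is exactly the claimed bound and, crucially, does not depend on $\boldsymbol{x}_0$. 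Global finite-time stability plus this uniform bound is precisely Definition \ref{FxTS Def}.

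The main obstacle — really the only subtle point — is the rigorous handling of the comparison/differential-inequality step when $V$ is only assumed continuous rather than smooth and $f$ is only continuous (so solutions may be non-unique in backward time, and $v(t)$ need only be absolutely continuous along a solution). I would address this by invoking a standard comparison lemma for differential inequalities with the upper-right Dini derivative of $v$, which is legitimate here because $V$ is continuously interpreted along trajectories and the right-hand side $-c_1 y^{\gamma_1} - c_2 y^{\gamma_2}$ is continuous and, on any interval bounded away from $0$, locally Lipschitz, so the scalar IVP has a well-defined maximal solution to compare against. A second minor point is to confirm that once $v$ hits $0$ it cannot leave: this follows because $V \ge 0$ and $\dot V \le 0$, so $V$ is nonincreasing and stays at $0$, i.e. $\boldsymbol{x}(t) \equiv 0$ for $t \ge T^\star$. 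With these two technical caveats dispatched, the estimate above completes the proof.
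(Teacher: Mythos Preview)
The paper does not provide its own proof of this theorem; it is quoted verbatim as \cite[Lemma 1]{polyakov2012nonlinear} and used as a black-box tool for the subsequent results. Your argument is correct and is precisely the standard proof of Polyakov's lemma: reduce to the scalar differential inequality $\dot v \le -c_1 v^{\gamma_1} - c_2 v^{\gamma_2}$ via comparison, separate variables to write the settling time as $\int_0^{v(0)} \frac{dy}{c_1 y^{\gamma_1}+c_2 y^{\gamma_2}}$, split at $y=1$, and drop the subdominant term on each piece so that each improper integral is finite and independent of $v(0)$. Your handling of the technical caveats (Dini derivative form of the comparison lemma when $V$ is only continuous; non-Lipschitzness of the scalar right-hand side at $y=0$; persistence at the origin once $V$ hits zero) is appropriate and matches what one finds in the original reference.
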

It has been shown that learning $\boldsymbol{\theta}^*$ in fixed-time leads to favorable disturbance rejection properties \cite{Chang2021Fixed}.
For control design, however, identifying $\boldsymbol{\theta}^*$ \textit{exactly} is unnecessary when there are multiple $\boldsymbol{\theta} \in \Theta$ that satisfy $\Delta(\boldsymbol{x})\boldsymbol{\theta} = \Delta(\boldsymbol{x})\boldsymbol{\theta}^*$, as is allowed here. In this case, an estimate $\hat{\boldsymbol{\theta}} \in \Omega$ where $\Omega$ is given by \eqref{set: Omega} is sufficient for learning $\Delta(\boldsymbol{x})\boldsymbol{\theta}^*$. In this paper, it is shown how learning such an estimate within a fixed time adds value to a class of safe controllers. 
The problem under consideration is formalized as follows.
\begin{Problem}\label{Problem Statement}
    Consider a system of the form \eqref{uncertain system}. Design adaptation and control laws, $\dot{\hat{\boldsymbol{\theta}}} = \tau(\boldsymbol{x},\hat{\boldsymbol{\theta}})$ and $\boldsymbol{u}=k(\boldsymbol{x},\hat{\boldsymbol{\theta}})$, such that:
    \vspace{-3mm}
    \begin{enumerate}
        \item The estimated parameter vector, $\hat{\boldsymbol{\theta}}$, is rendered fixed-time stable to the set $\Omega$ given by \eqref{set: Omega}, i.e. $\hat{\boldsymbol{\theta}}(t) \in \Theta$, $\forall t \geq 0$ and $\hat{\boldsymbol{\theta}}(t) \rightarrow \Omega$ as $t \mapsto T$, 
        % and $\hat{\boldsymbol{\theta}}(t) \in \Omega$ for all $t \geq T$,
        independent of $\hat{\boldsymbol{\theta}}(0)$,
        % , where $T \leq T_b < \infty$, with $T_b$ given by \eqref{eq: FxTS Convergence Time}.
        \item The system trajectories remain safe for all time, i.e. $\boldsymbol{x}(t) \in S$,  $\forall t \geq 0$.
        % \item The system trajectories converge to a goal set within fixed-time, $T_g$, i.e. $\boldsymbol{x}(t) \in S_g$, $\forall t \geq T_g$.
    \end{enumerate}
\end{Problem}
% \noindent The following section contains our proposed solution to Problem \ref{Problem Statement}.

%%%%%%%%%%%%%%%%%%%%%%%%%%%%%%%%%%%%%%%%%%%%%%%%%%%%%%%%%%%%%%%%%%%%%%%%%%%%%%%
%******************************** Main Results *******************************%
%%%%%%%%%%%%%%%%%%%%%%%%%%%%%%%%%%%%%%%%%%%%%%%%%%%%%%%%%%%%%%%%%%%%%%%%%%%%%%%
\section{Parameter Adaptation}\label{sec: FxT Parameter Estimation}

% Before introducing our first main result, a FxTS adaptation law suitable for synthesizing a safe control law under uncertainty, we review 
The following is a common, albeit restrictive, assumption in the parameter identification literature (e.g., \cite{Na2015robotic,Rios2017TimeVarying}). 
\begin{Assumption}\label{ass: v=M0_ES}
    For all time $t \geq 0$ there is a known vector, $\boldsymbol{v}(t) \in \mathbb R^q$, and a known, full column rank matrix, $\boldsymbol{M}(t) \in \mathbb R^{q \times p}$,
    % with $\sigma_{min}(\boldsymbol{M}) > \sigma > 0$, 
    such that $\Tilde{\boldsymbol{\theta}}(t)$ is a solution to 
    \begin{equation}\label{eq.es_matrix}
        \boldsymbol{M}(t)\Tilde{\boldsymbol{\theta}}(t)=\boldsymbol{v}(t).
    \end{equation}
\end{Assumption}
\noindent The above more commonly appears in the form $\sigma_{min}(\boldsymbol{M}(t)) > s > 0$, $\forall t \geq 0$, and is equivalent to system identifiability \cite[Def. 4.6]{Ljung1987SystemID}. Its utility is in the design of adaptation laws that provide stability guarantees (e.g., ES \cite{Na2015robotic} or FTS/FxTS \cite{Rios2017TimeVarying}) when \eqref{eq.es_matrix} is perturbed, i.e., when
\begin{equation}\label{eq.perturbed_mv}
    \boldsymbol{M}(t)\Tilde{\boldsymbol{\theta}}(t)=\boldsymbol{v}(t)-\boldsymbol{\delta}(t),
\end{equation}
for some unknown $\boldsymbol{\delta} \in \mathbb{R}^q$. 
% For example, in \cite{Rios2017TimeVarying} the authors seek to design an adaptation law for a class of systems of the form \eqref{eq.perturbed_mv}, and assume that $\boldsymbol{M}(t)$ satisfies the property $\sigma_{min}(\boldsymbol{M}(t)) > \sigma > 0$ for all $t\geq 0$, which is equivalent to the identifiability condition and analogous to Assumption \ref{ass: v=M0_ES} in the disturbance-free case, i.e. when $\delta(t) \equiv 0$, which they treat explicitly. 
The authors of \cite{Na2015robotic} further note that Assumption \ref{ass: v=M0_ES} can be satisfied when the regressor, $\Delta$, of system \eqref{uncertain system} satisfies the PE condition. The following represents a relaxation of Assumption \ref{ass: v=M0_ES}.

% We now review the ES adaptation law introduced in \cite{Na2015robotic}, modified to conform to our notation.
% \begin{Theorem}\cite[Theorem 2.1]{Na2015robotic}\label{thm.es_adaptation}
%     Consider a system \eqref{uncertain system}, for which it is known that Assumption \ref{ass: v=M0_ES} holds. Then, under the ensuing adaptation law,
%     \begin{equation}
%         \dot{\hat{\boldsymbol{\theta}}} = \boldsymbol{\Gamma}\boldsymbol{v},
%     \end{equation}
%     where $\boldsymbol{\Gamma}>0$ is a constant, diagonal gain matrix, the parameter estimation error, $\Tilde{\boldsymbol{\theta}}$, converges exponentially to the origin, i.e. $\Tilde{\boldsymbol{\theta}}\mapsto 0$ as $t\mapsto \infty$.
% \end{Theorem}
% In what follows, we design an adaptation law for the system \eqref{uncertain system} that guarantees convergence of parameter estimates to the set $\Omega$ given by \eqref{set: Omega} within a fixed-time, independent of the initial estimates, without requirements on system identifiability or persistent excitation. Before stating our result, we introduce a relaxed version of Assumption \ref{ass: v=M0_ES}.
\begin{Assumption}\label{ass: v=M0}
    For all $t \geq 0$ there is a known vector, $\boldsymbol{v}(t) \in \mathbb R^n$, and a known matrix, $\boldsymbol{M}(t) \in \mathbb R^{n \times p}$, that jointly satisfy
    \vspace{-3mm}
    \begin{enumerate}
        \item $\Tilde{\boldsymbol{\theta}}(t)$ is one (not necessarily unique) solution to \eqref{eq.es_matrix} \label{ass.vm0_1}
        \item $\boldsymbol{v}(t)$ and $\boldsymbol{M}(t)$ are bounded for bounded $\boldsymbol{x}(t)$
        \item $\mathcal{N}(\boldsymbol{M}(t)) = \mathcal{N}(\Delta(\boldsymbol{x}(t)))$.\label{ass.vm0_2}
    \end{enumerate}
\end{Assumption}
\begin{Remark}
    In contrast to Assumption \ref{ass: v=M0_ES}, Assumption \ref{ass: v=M0} does not require that $\boldsymbol{M}$ be full column-rank. Instead, it requires that $\mathcal{N}(\boldsymbol{M}) = \mathcal{N}(\boldsymbol{\Delta})$, which relaxes restrictions imposed by previous works \cite{Na2015robotic,Rios2017TimeVarying}.
\end{Remark}
\begin{Remark}\label{rmk.schemes}
    Note that Assumption \ref{ass: v=M0} is satisfied with $\boldsymbol{M}$ and $\boldsymbol{v}$ obtained by rewriting the system dynamics \eqref{uncertain system} as
\begin{align}\label{eq: rate v=M0} \underbrace{\Delta(\boldsymbol{x})}_\text{$\boldsymbol{M}$}\Tilde{\boldsymbol{\theta}}=\underbrace{\dot{\boldsymbol{x}} - f(\boldsymbol{x}) - g(\boldsymbol{x})\boldsymbol{u} - \Delta(\boldsymbol{x})\hat{\boldsymbol{\theta}}}_\text{$\boldsymbol{v}$}.
\end{align}
% all of whose terms are either known or measured except $\Tilde{\boldsymbol{\theta}}$, 
% one obtains that 
Clearly, \eqref{eq: rate v=M0} requires that the state and its time-derivative are available. When measurements of $\dot{\boldsymbol{x}}$ are not available, they can be approximated using the state measurements and numerical differentiation. Such schemes, however, can introduce approximation error into \eqref{eq: rate v=M0} even under perfect state measurements, so that it takes the form \eqref{eq.perturbed_mv}. Such errors  along with the consideration that the state measurements are in general imperfect are addressed in Section \ref{sec: Robust Adaptive Safety}. While there are other viable approaches, including the observer introduced by \cite{Adetola2008ParameterEstimation}, the details are beyond the scope of this paper.
\end{Remark}
The following is one of the main results of the paper, an adaptation law that renders $\hat{\boldsymbol{\theta}}$ FxTS to the set \eqref{set: Omega}.
\begin{Theorem}\label{Thm: FxT Parameter Adaptation}
    Consider a dynamical system of the form \eqref{uncertain system} and suppose that Assumption \ref{ass: v=M0} holds. If  $\mathcal{N}(\Delta(\boldsymbol{x}(t))) = \mathcal{N}(\Delta(\boldsymbol{x}(0)))$, $\forall t \in [0,T]$, where
    \begin{equation}\label{eq: FxT Time Bound}
        T = \frac{\mu\pi}{2k_V^2\sqrt{ab}},
    \end{equation}
    with $a,b>0$, $\mu > 2$, and
    \begin{equation}\label{eq: kV}
        k_V = \underaccent{\bar}{\sigma}(\boldsymbol{M},T)\sqrt{2\lambda_{max}(\boldsymbol{\Gamma})},
    \end{equation}
    where $\boldsymbol\Gamma \in \mathbb R^{p\times p}$ is a constant, positive-definite, gain matrix, 
    % and $\underaccent{\bar}{\sigma}(\boldsymbol{M})>0$ denotes the smallest nonzero singular value of $\boldsymbol{M}$ over the time interval, 
    then under the ensuing adaptation law,
    \begin{equation}\label{eq: FxT Estimation Law}
        \dot{\hat{\boldsymbol{\theta}}} = \boldsymbol\Gamma \boldsymbol{M}^T\boldsymbol{v}\left(a\|\boldsymbol{v}\|^{ \frac{2}{\mu}} + b\|\boldsymbol{v}\|^{-\frac{2}{\mu}}\right),
    \end{equation}
    the estimated parameter vector, $\hat{\boldsymbol{\theta}}$, converges to $\Omega$ defined by \eqref{set: Omega} within a fixed time $T$, i.e. $\hat{\boldsymbol{\theta}}(t) \rightarrow \Omega$ and $\Tilde{\boldsymbol{\theta}}(t) \rightarrow \mathcal{N}(\Delta(\boldsymbol{x}(t))$ as $t \rightarrow T_c(\hat{\boldsymbol{\theta}}(0)) \leq T$, $\forall \hat{\boldsymbol{\theta}}(0) \in \mathbb R^p$.
\end{Theorem}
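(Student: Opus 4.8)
The plan is to build a Lyapunov function for the parameter-estimation-error dynamics and show it satisfies the FxTS differential inequality \eqref{eq: FxTS Lyapunov conditions} of Theorem \ref{Thm: FxTS Lyapunov Conditions}, but with the convergence measured \emph{transverse} to the (constant) nullspace of $\Delta(\boldsymbol{x}(t)) = \mathcal{N}(\boldsymbol{M}(t))$ rather than to the origin of $\mathbb{R}^p$. First I would fix the candidate $V(\Tilde{\boldsymbol{\theta}}) = \tfrac{1}{2}\Tilde{\boldsymbol{\theta}}^T\boldsymbol{\Gamma}^{-1}\Tilde{\boldsymbol{\theta}}$, differentiate along \eqref{error dynamics} and the adaptation law \eqref{eq: FxT Estimation Law}, and use $\dot{\Tilde{\boldsymbol{\theta}}} = -\dot{\hat{\boldsymbol{\theta}}}$ to obtain
\begin{equation}\nonumber
\dot{V} = -\Tilde{\boldsymbol{\theta}}^T\boldsymbol{M}^T\boldsymbol{v}\left(a\|\boldsymbol{v}\|^{2/\mu} + b\|\boldsymbol{v}\|^{-2/\mu}\right).
\end{equation}
By Assumption \ref{ass: v=M0}(i), $\boldsymbol{M}\Tilde{\boldsymbol{\theta}} = \boldsymbol{v}$, so $\Tilde{\boldsymbol{\theta}}^T\boldsymbol{M}^T\boldsymbol{v} = \|\boldsymbol{v}\|^2$, giving $\dot V = -a\|\boldsymbol v\|^{2+2/\mu} - b\|\boldsymbol v\|^{2-2/\mu}$.

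Next I would relate $\|\boldsymbol{v}\|$ back to $V$. Write $\Tilde{\boldsymbol{\theta}} = \Tilde{\boldsymbol{\theta}}_\perp + \Tilde{\boldsymbol{\theta}}_0$ where $\Tilde{\boldsymbol{\theta}}_0 \in \mathcal{N}(\boldsymbol{M})$ and $\Tilde{\boldsymbol{\theta}}_\perp \in \mathrm{row}(\boldsymbol{M})$; then $\boldsymbol{v} = \boldsymbol{M}\Tilde{\boldsymbol{\theta}}_\perp$ and, since $\underaccent{\bar}{\sigma}(\boldsymbol{M})$ is the smallest \emph{nonzero} singular value, $\|\boldsymbol{v}\| = \|\boldsymbol{M}\Tilde{\boldsymbol{\theta}}_\perp\| \geq \underaccent{\bar}{\sigma}(\boldsymbol{M})\|\Tilde{\boldsymbol{\theta}}_\perp\|$. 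The right Lyapunov function to track convergence of $\hat{\boldsymbol\theta}$ to $\Omega$ is $V_\perp \triangleq \tfrac12 \Tilde{\boldsymbol{\theta}}_\perp^T \boldsymbol\Gamma^{-1}\Tilde{\boldsymbol{\theta}}_\perp$; since $\mathcal N(\boldsymbol M)$ is constant on $[0,T]$ and the adaptation drives $\dot{\hat{\boldsymbol\theta}} \in \mathrm{row}(\boldsymbol M^T) = \mathrm{row}(\boldsymbol M)$, the component $\Tilde{\boldsymbol\theta}_0$ is invariant and $\dot V = \dot V_\perp$. Bounding $\tfrac12\|\Tilde{\boldsymbol{\theta}}_\perp\|^2\lambda_{min}(\boldsymbol\Gamma^{-1}) \le V_\perp \le \tfrac12\|\Tilde{\boldsymbol{\theta}}_\perp\|^2\lambda_{max}(\boldsymbol\Gamma^{-1})$, i.e. $\|\Tilde{\boldsymbol\theta}_\perp\|^2 \ge 2V_\perp/\lambda_{max}(\boldsymbol\Gamma^{-1}) = 2\lambda_{min}(\boldsymbol\Gamma)V_\perp$, I get $\|\boldsymbol v\|^2 \ge \underaccent{\bar}{\sigma}(\boldsymbol M)^2 \cdot 2\lambda_{min}(\boldsymbol\Gamma)V_\perp$. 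Feeding this into the two terms of $\dot V_\perp$ and recalling $\gamma_1 = 1 + \tfrac1\mu$, $\gamma_2 = 1 - \tfrac1\mu$ yields
\begin{equation}\nonumber
\dot V_\perp \le -c_1 V_\perp^{1+1/\mu} - c_2 V_\perp^{1-1/\mu},
\end{equation}
with $c_1, c_2 > 0$ built from $a, b, \underaccent{\bar}{\sigma}(\boldsymbol M), \lambda_{min}(\boldsymbol\Gamma)$ (here I'd want the $k_V$ shorthand to match \eqref{eq: kV}, although note \eqref{eq: kV} uses $\lambda_{max}(\boldsymbol\Gamma)$, so I would double-check the bound direction and whether the intended bound is on $V$ versus $\|\Tilde{\boldsymbol\theta}_\perp\|$). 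Since $\mu > 2 > 1$ the hypotheses of Theorem \ref{Thm: FxTS Lyapunov Conditions} hold, so $V_\perp \to 0$ in fixed time, which is exactly $\Tilde{\boldsymbol\theta}(t) \to \mathcal N(\Delta(\boldsymbol x(t)))$ and hence $\hat{\boldsymbol\theta}(t)\to\Omega$; plugging $c_1,c_2,\gamma_1,\gamma_2$ into \eqref{eq: FxTS Convergence Time} and simplifying should give the closed-form bound \eqref{eq: FxT Time Bound}, $T = \mu\pi/(2k_V^2\sqrt{ab})$.

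The main obstacle I anticipate is \textbf{tracking the nullspace/rowspace decomposition carefully and honestly}: I must verify that $\Tilde{\boldsymbol\theta}_0$ really is invariant under \eqref{eq: FxT Estimation Law} (this uses both constancy of $\mathcal N(\Delta(\boldsymbol x))$ on $[0,T]$ and the fact that $\boldsymbol M^T\boldsymbol v$ lies in $\mathrm{row}(\boldsymbol M)$, which coincides with the orthogonal complement of $\mathcal N(\boldsymbol M) = \mathcal N(\Delta)$ by Assumption \ref{ass: v=M0}(iii)), that $\underaccent{\bar}{\sigma}(\boldsymbol M)$ stays positive and bounded below uniformly on the interval so the inequality $\|\boldsymbol v\|\ge\underaccent{\bar}{\sigma}(\boldsymbol M)\|\Tilde{\boldsymbol\theta}_\perp\|$ is uniform, and that $\boldsymbol v$ never vanishes prematurely (which would make the $\|\boldsymbol v\|^{-2/\mu}$ term singular) — or more precisely, that when $\boldsymbol v \to 0$ it is exactly at the convergence instant, so the $b$-term behaves like the $c_2 V^{\gamma_2}$ term with $\gamma_2 < 1$ that \emph{forces} finite-time arrival rather than blowing up. A secondary nuisance is bookkeeping the constants through \eqref{eq: FxTS Convergence Time} to land on the precise form \eqref{eq: FxT Time Bound}, where the $k_V^2$ and $\sqrt{ab}$ grouping suggests the authors symmetrized the $c_1,c_2$ contributions; I would set $c_1 = a k_V^2 \cdot (\text{const})$, $c_2 = b k_V^2\cdot(\text{const})$ and check the $\tfrac1{c_1(\gamma_1-1)} + \tfrac1{c_2(1-\gamma_2)}$ sum collapses using $\gamma_1 - 1 = 1 - \gamma_2 = 1/\mu$ together with an AM–GM / $\arctan$-type bound (cf. Lemma \ref{lem.robust_fxts}) to replace $1/(a) + 1/(b)$-style terms by $\pi/(2\sqrt{ab})$.
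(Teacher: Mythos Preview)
Your proposal is correct and matches the paper's approach almost step for step: the paper works directly with $V=\tfrac12\Tilde{\boldsymbol\theta}_R^T\boldsymbol\Gamma^{-1}\Tilde{\boldsymbol\theta}_R$ on the rowspace component, shows $\dot{\hat{\boldsymbol\theta}}_N=\boldsymbol 0$ via the same rowspace/nullspace decomposition you sketch, obtains $\dot V=-a\|\boldsymbol M\Tilde{\boldsymbol\theta}_R\|^{2+2/\mu}-b\|\boldsymbol M\Tilde{\boldsymbol\theta}_R\|^{2-2/\mu}$, bounds via $\underaccent{\bar}{\sigma}(\boldsymbol M)$ and the eigenvalues of $\boldsymbol\Gamma$, and then invokes Theorem~\ref{Thm: FxTS Lyapunov Conditions} together with \cite[Lemma~2]{Parsegov2012Fixed} (precisely your $\arctan$ intuition) to collapse the settling-time bound to $\mu\pi/(2k_V^2\sqrt{ab})$; it also opens with the well-definedness check at $\|\boldsymbol v\|\to 0$ that you anticipated. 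Your hesitation about $\lambda_{max}(\boldsymbol\Gamma)$ versus $\lambda_{min}(\boldsymbol\Gamma)$ in $k_V$ is well-placed --- the paper does use $\lambda_{max}$ at exactly that step.
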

\begin{proof}
It will first be shown that \eqref{eq: FxT Estimation Law} is well-defined, and then proven that $\hat{\boldsymbol{\theta}}$ converges to $\Omega$ within fixed-time $T$. 

It is obvious by boundedness of $\boldsymbol{v}$ and $\boldsymbol{M}$ that \eqref{eq: FxT Estimation Law} is bounded whenever $0<\|\boldsymbol{v}\|<\infty$, so consider when $\|\boldsymbol{v}\|\rightarrow 0$. Note that \eqref{eq: FxT Estimation Law} can be rewritten as $\dot{\hat{\boldsymbol{\theta}}} = G_1(\boldsymbol{M},\boldsymbol{v}) + G_2(\boldsymbol{M},\boldsymbol{v})$, where $G_1(\boldsymbol{M},\boldsymbol{v}) = a\boldsymbol\Gamma \boldsymbol{M}^T\boldsymbol{v}\|\boldsymbol{v}\|^{\frac{2}{\mu}}$ and $G_2(\boldsymbol{M},\boldsymbol{v}) = b\boldsymbol\Gamma \boldsymbol{M}^T\boldsymbol{v}\|\boldsymbol{v}\|^{-\frac{2}{\mu}}$. It is evident that $G_1(\boldsymbol{M},\boldsymbol{v}) \rightarrow 0$ as $\|\boldsymbol{v}\|\rightarrow 0$, however, for $G_2(\boldsymbol{M},\boldsymbol{v})$ it follows that $\|G_2(\boldsymbol{M},\boldsymbol{v})\|^2 = b^2\|\boldsymbol{v}\|^{-4/\mu}\boldsymbol{v}^T\left(\boldsymbol{M}\boldsymbol\Gamma^T\boldsymbol\Gamma \boldsymbol{M}^T\right)\boldsymbol{v}$.
% \begin{align}
%     \|G_2(\boldsymbol{M},\boldsymbol{v})\|^2 = \frac{b^2}{\|\boldsymbol{v}\|^{4/\mu}}\boldsymbol{v}^T\left(\boldsymbol{M}\boldsymbol\Gamma^T\boldsymbol\Gamma \boldsymbol{M}^T\right)\boldsymbol{v}. \nonumber
% \end{align}
Then, defining $\boldsymbol{N} = \boldsymbol{M}\boldsymbol\Gamma^T\boldsymbol\Gamma \boldsymbol{M}^T$, it holds that, $\forall t \leq T$, 
% $b^2\sigma_{min}(\boldsymbol{N})\|\boldsymbol{v}\|^{2-4/\mu} \leq \|G_2(\boldsymbol{M},\boldsymbol{v})\|^2 \leq b^2\sigma_{max}(\boldsymbol{N})\|\boldsymbol{v}\|^{2-4/\mu}$.
\begin{align}
    b^2\sigma_{min}(\boldsymbol{N})\|\boldsymbol{v}\|^{\phi} \leq \|G_2(\boldsymbol{M},\boldsymbol{v})\|^2 \leq b^2\sigma_{max}(\boldsymbol{N})\|\boldsymbol{v}\|^{\phi} \nonumber,
\end{align}
with $\phi = 2 - 4/\mu$.
Since $0 \leq \sigma_{min}(\boldsymbol{N}),\sigma_{max}(\boldsymbol{N}) < \infty$, it follows that if $\mu > 2$, then $\lim_{\|\boldsymbol{v}\|\rightarrow 0}\|G_2(\boldsymbol{M},\boldsymbol{v})\| = 0$.
% \begin{equation}
%     \lim_{\|\boldsymbol{v}\|\rightarrow 0}\|G_2(\boldsymbol{M},\boldsymbol{v})\| = 0. \nonumber
% \end{equation}
Thus, with $\mu > 2$,
% , which results in $G_2(\boldsymbol{M},\boldsymbol{v}) \rightarrow 0$ as $\|\boldsymbol{v}\|\rightarrow 0$ and therefore 
\eqref{eq: FxT Estimation Law} is well-defined.

For convergence of $\hat{\boldsymbol{\theta}}$ to $\Omega$, observe that $\Tilde{\boldsymbol{\theta}}$, $\dot{\Tilde{\boldsymbol{\theta}}}$, and $\dot{\hat{\boldsymbol{\theta}}}$ may be expressed as the following linear combinations: $\Tilde{\boldsymbol{\theta}} = \Tilde{\boldsymbol{\theta}}_R + \Tilde{\boldsymbol{\theta}}_N$,  $\dot{\Tilde{\boldsymbol{\theta}}} = \dot{\Tilde{\boldsymbol{\theta}}}_R + \dot{\Tilde{\boldsymbol{\theta}}}_N$, and $\dot{\hat{\boldsymbol{\theta}}} = \dot{\hat{\boldsymbol{\theta}}}_R + \dot{\hat{\boldsymbol{\theta}}}_N$,
where $\Tilde{\boldsymbol{\theta}}_R,\dot{\Tilde{\boldsymbol{\theta}}}_R,\dot{\hat{\boldsymbol{\theta}}}_R \in \mathcal{R}(\boldsymbol{M})$ and $\Tilde{\boldsymbol{\theta}}_N,\dot{\Tilde{\boldsymbol{\theta}}}_N,\dot{\hat{\boldsymbol{\theta}}}_N \in \mathcal{N}(\boldsymbol{M})$. By rank-nullity and with row$(\boldsymbol{M})$ and $\mathcal{N}(\boldsymbol{M})$ as orthogonal complements, such a decomposition always exists \cite{Strang2016LinAlg}. Next, from \eqref{eq: FxT Estimation Law} and Assumption \ref{ass: v=M0}, 
% observe that
\begin{align}
    \dot{\hat{\boldsymbol{\theta}}} &= \boldsymbol\Gamma \boldsymbol{M}^T\boldsymbol{v}\left(a\|\boldsymbol{v}\|^{ \frac{2}{\mu}} + b\|\boldsymbol{v}\|^{-\frac{2}{\mu}}\right) \nonumber \\
    &= \boldsymbol\Gamma \boldsymbol{M}^T\boldsymbol{M}\Tilde{\boldsymbol{\theta}}\left(a\|\boldsymbol{M}\Tilde{\boldsymbol{\theta}}\|^{\frac{2}{\mu}} + b\|\boldsymbol{M}\Tilde{\boldsymbol{\theta}}\|^{-\frac{2}{\mu}}\right) \nonumber.
\end{align}
Since $\boldsymbol{M}\Tilde{\boldsymbol{\theta}}=\boldsymbol{M}(\Tilde{\boldsymbol{\theta}}_R+\Tilde{\boldsymbol{\theta}}_N)=\boldsymbol{M}\Tilde{\boldsymbol{\theta}}_R$, it follows that $\dot{\hat{\boldsymbol{\theta}}} = \boldsymbol\Gamma \boldsymbol{M}^T\boldsymbol{M}\Tilde{\boldsymbol{\theta}}_R\left(a\|\boldsymbol{M}\Tilde{\boldsymbol{\theta}}_R\|^{\frac{2}{\mu}} + b\|\boldsymbol{M}\Tilde{\boldsymbol{\theta}}_R\|^{ \frac{-2}{\mu}}\right) = \dot{\hat{\boldsymbol{\theta}}}_R$.
% \begin{align}
%     \dot{\hat{\boldsymbol{\theta}}} &= \boldsymbol\Gamma \boldsymbol{M}^T\boldsymbol{M}\Tilde{\boldsymbol{\theta}}_R\left(a\|\boldsymbol{M}\Tilde{\boldsymbol{\theta}}_R\|^{\frac{2}{\mu}} + \frac{b}{\|\boldsymbol{M}\Tilde{\boldsymbol{\theta}}_R\|^{ \frac{2}{\mu}}}\right) \nonumber \\
%     &= \dot{\hat{\boldsymbol{\theta}}}_R \nonumber.
% \end{align}
Then, from $\dot{\hat{\boldsymbol{\theta}}} = \dot{\hat{\boldsymbol{\theta}}}_R + \dot{\hat{\boldsymbol{\theta}}}_N$, it holds that $\dot{\hat{\boldsymbol{\theta}}}_N = \boldsymbol{0}_{p \times 1}$ and thus by \eqref{error dynamics}, 
% $\dot{\Tilde{\boldsymbol{\theta}}}_R = -\dot{\hat{\boldsymbol{\theta}}}_R,$ $ \;\; \Tilde{\boldsymbol{\theta}}_R(0) = \Tilde{\boldsymbol{\theta}}_{R,0}$
\begin{equation}
    \dot{\Tilde{\boldsymbol{\theta}}}_R = -\dot{\hat{\boldsymbol{\theta}}}_R, \;\; \Tilde{\boldsymbol{\theta}}_R(0) = \Tilde{\boldsymbol{\theta}}_{R,0}. \label{eq.err_tilde_R} 
    % \\
    % \dot{\Tilde{\boldsymbol{\theta}}}_N &= \boldsymbol{0}_{p \times 1}, \;\; \Tilde{\boldsymbol{\theta}}_N(0) = \Tilde{\boldsymbol{\theta}}_{N,0}.
\end{equation}
By Assumption \ref{ass: v=M0}, $\Tilde{\boldsymbol{\theta}}_N(0) \in \mathcal{N}(\Delta(\boldsymbol{x}(0)))$, and by $\mathcal{N}(\Delta(\boldsymbol{x}(t)))=\mathcal{N}(\Delta(\boldsymbol{x}(0)))$ for all $t \in [0,T]$, $\hat{\boldsymbol{\theta}} \in \Omega$ whenever $\Tilde{\boldsymbol{\theta}}_R = 0$. It is sufficient, therefore, to show that $\Tilde{\boldsymbol{\theta}}_R(t) \rightarrow 0$ as $t \rightarrow T_c(\hat{\boldsymbol{\theta}}(0)) \leq T$.

Consider the Lyapunov function candidate $V = \frac{1}{2} \Tilde{\boldsymbol{\theta}}_R^T \boldsymbol{\Gamma}^{-1} \Tilde{\boldsymbol{\theta}}_R$. Its time derivative along the trajectories of \eqref{eq.err_tilde_R} is
\begin{align}
    \dot{V} &= -\Tilde{\boldsymbol{\theta}}_R^T\boldsymbol{\Gamma}^{-1}\dot{\hat{\boldsymbol{\theta}}}_R \nonumber \\
    &= -\Tilde{\boldsymbol{\theta}}_R^T\boldsymbol{M}^T\boldsymbol{M}\Tilde{\boldsymbol{\theta}}_R\left(a\|\boldsymbol{M}\Tilde{\boldsymbol{\theta}}_R\|^{\frac{2}{\mu}} + b\|\boldsymbol{M}\Tilde{\boldsymbol{\theta}}_R\|^{-\frac{2}{\mu}}\right) \nonumber \\
    &= -a\|\boldsymbol{M}\Tilde{\boldsymbol{\theta}}_R\|^{2+\frac{2}{\mu}} - b\|\boldsymbol{M}\Tilde{\boldsymbol{\theta}}_R\|^{2-\frac{2}{\mu}}. \nonumber
\end{align}
When $\Tilde{\boldsymbol{\theta}}_R \neq 0$, we have that $\|\boldsymbol{M}\Tilde{\boldsymbol{\theta}}_R\| > 0$, and therefore $0 < \underaccent{\bar}{\sigma}(\boldsymbol{M}, T)\|\Tilde{\boldsymbol{\theta}}_R\| \leq \|\boldsymbol{M}\Tilde{\boldsymbol{\theta}}_R\|$, $\forall t \leq T$. Then, since $\frac{1}{2}\lambda_{min}(\boldsymbol{\Gamma}^{-1})\Tilde{\boldsymbol{\theta}}_R^T\Tilde{\boldsymbol{\theta}}_R \leq V$ and $\lambda_{min}(\boldsymbol{\Gamma}^{-1})=1/\lambda_{max}(\boldsymbol{\Gamma})$, it follows from the definition of $V$ that $\|\Tilde{\boldsymbol{\theta}}_R\| \leq \sqrt{2V\lambda_{max}(\boldsymbol{\Gamma})} = L(V)$. Using this, we obtain
\begin{align}
    \dot{V} &\leq -a\Big(\underaccent{\bar}{\sigma}(\boldsymbol{M},T)L(V)\Big)^{2+\frac{2}{\mu}} -b\Big(\underaccent{\bar}{\sigma}(\boldsymbol{M},T)L(V)\Big)^{2-\frac{2}{\mu}} \nonumber \\
    &\leq -c_1V^{1 + \frac{1}{\mu}} - c_2V^{1 - \frac{1}{\mu}}, \label{eq.vdot_proof}
\end{align}
which takes the form \eqref{eq: FxTS Lyapunov conditions}, where $c_1 = ak_V^{2+\frac{2}{\mu}}$ and $c_2 = bk_V^{2 - \frac{2}{\mu}}$,
% \begin{align}
%     c_1 = ak_V^{2+\frac{2}{\mu}} \label{eq.c1_adaptation_law}\\
%     c_2 = bk_V^{2 - \frac{2}{\mu}} \label{eq.c2_adaptation_law},
% \end{align}
and $k_V$ is given by \eqref{eq: kV}. Then, \eqref{eq: FxTS Lyapunov conditions} 
% the requirements for FxTS provided in Theorem \ref{Thm: FxTS Lyapunov Conditions} are 
is met insofar as $\Tilde{\boldsymbol{\theta}}_R$ is concerned. Thus, $\Tilde{\boldsymbol{\theta}}_R \rightarrow 0$ as $t \rightarrow T_c(\hat{\boldsymbol{\theta}}(0))$, with $T_c$ the settling time function.
% $ \leq T$ where $T$ is given by \eqref{eq: FxTS Convergence Time} and $\gamma_1 = 1+\frac{1}{\mu}$ and $\gamma_2 = 1-\frac{1}{\mu}$. 
It then follows from \cite[Lem. 2]{Parsegov2012Fixed} that $T_c(\hat{\boldsymbol{\theta}}(0)) \leq T$, $\forall \hat{\boldsymbol{\theta}}(0) \in \mathbb R^p$, with $T$ given by \eqref{eq: FxT Time Bound}.
Thus, $\hat{\boldsymbol{\theta}} \rightarrow \Omega$ as $t \rightarrow T$, independent of the initial estimates, $\hat{\boldsymbol{\theta}}(0)$.
\end{proof}
% The adaptation law \eqref{eq: FxT Estimation Law} is usable as long as there exists a way to satisfy Assumption \ref{ass: v=M0}. The measurement and prediction schemes outlined in Sections \ref{subsub.ratemeas} and \ref{subsub.predictor} are two avenues toward realizing this goal, though there may exist others.   

\begin{Remark}\label{rmk: min singular val}
    The bound on settling time \eqref{eq: FxT Time Bound} requires knowledge of $\underaccent{\bar}{\sigma}(\boldsymbol{M},T)$. While this may not be known a priori, domain knowledge can provide a lower bound over the relevant region of the state space. In some cases, a warm-up time may be required for $\sigma_r(\boldsymbol{M}(t))$ to eclipse a lower bound. This must be added to the convergence time derived in \eqref{eq: FxT Time Bound}.
\end{Remark}
% While it is not guaranteed that $\hat{\boldsymbol{\theta}} \rightarrow \boldsymbol{\theta}^*$ unless $\mathcal{N}(\Delta(\boldsymbol{x}(t))) = \{0\}$ for all $t \in [0,T]$, it is sufficient from a control perspective to learn some $\hat{\boldsymbol{\theta}}$ such that $\Delta(\boldsymbol{x})\hat{\boldsymbol{\theta}}=\Delta(\boldsymbol{x})\boldsymbol{\theta}^*$.
With knowledge of the parameter estimates converging to $\Omega$ in fixed-time, an expression for the upper bound on the supremum norm of the parameter error is now introduced.
\begin{Corollary}\label{Cor: upper bound parameter error}
Suppose that the premises of Theorem \ref{Thm: FxT Parameter Adaptation} hold. 
% Consider $\Tilde{\boldsymbol{\theta}}_R \in \mathcal{R}(\boldsymbol{M})$,
% as specified in \eqref{eq.lin_comb_tilde}
% the portion of the parameter error vector associated with the rowspace of matrix $\boldsymbol{M}$.
% , which is assumed to be of rank $r \leq p$. 
If, in addition, the following conditions are met:
\vspace{-2mm}
\begin{enumerate}[label=(\roman*)]
    \item The initial estimated parameter vector lies within the known admissible parameter set, $\Theta$, i.e. $\hat{\boldsymbol{\theta}}(0) \in \Theta$,
    \item The adaptation law, $\dot{\hat{\boldsymbol{\theta}}}$, is given by \eqref{eq: FxT Estimation Law}, where $\boldsymbol{\Gamma}$ is constant, positive-definite, and also \textbf{diagonal},
    \item A lower bound on $\sigma_r(\boldsymbol{M}) > s > 0$, is known,
\end{enumerate}
\vspace{-2mm}
then $\forall t \in [0,T]$, where $T$ is given by \eqref{eq: FxT Time Bound}, the following is a monotonically decreasing upper bound on $\|\Tilde{\boldsymbol{\theta}}_R(t)\|_{\infty}$:
\begin{equation}\label{eq.err_bound}
    \|\Tilde{\boldsymbol{\theta}}_R(t)\|_{\infty} \leq \sqrt{2 \lambda_{max}(\boldsymbol{\Gamma})\left( \sqrt{\frac{a}{b}}k_V^{\frac{2}{\mu}}\tan\left(A(t)\right) \right)^\mu} \coloneqq \eta (t),
\end{equation}
where $k_V$ is defined by \eqref{eq: kV}, and
\begin{align}
    A(t)&=\max\left\{\Xi - t\sqrt{c_1c_2}/\mu,0\right\}, \label{eq.A(t)} \\ 
    \Xi &= \tan^{-1}\left(\sqrt{\frac{c_2}{c_1}}\left(\frac{1}{2}\boldsymbol{\eta}(0)^T\Gamma^{-1}\boldsymbol{\eta}(0)\right)^{\frac{1}{\mu}}\right),% \leq \frac{\pi}{2}
\end{align}
with $\boldsymbol{\eta}(t) = \eta(t) \cdot \boldsymbol{1}_{p\times1}$, and $\|\Tilde{\boldsymbol{\theta}}_R(t)\|_{\infty} =0$, $\forall t > T$.
\end{Corollary}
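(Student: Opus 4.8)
The plan is to integrate the differential inequality established inside the proof of Theorem~\ref{Thm: FxT Parameter Adaptation} by a scalar comparison argument. Recall that there the Lyapunov function $V=\tfrac12\Tilde{\boldsymbol{\theta}}_R^T\boldsymbol{\Gamma}^{-1}\Tilde{\boldsymbol{\theta}}_R$ was shown to satisfy $\dot V\le -c_1V^{1+1/\mu}-c_2V^{1-1/\mu}$, with $c_1,c_2$ as in \eqref{eq.c1_adaptation_law}--\eqref{eq.c2_adaptation_law}; condition~(iii) is precisely what makes $c_1,c_2$ (or conservative lower bounds on them, which only enlarge the resulting error bound) available a priori, since $k_V$ in \eqref{eq: kV} depends on $\underaccent{\bar}{\sigma}(\boldsymbol{M})$. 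The first step is to produce a \emph{computable} over-estimate of $V(0)$. From condition~(i), together with $\boldsymbol{\theta}^*,\hat{\boldsymbol{\theta}}(0)\in\Theta$ and $\Theta$ a polytope, one obtains $\|\Tilde{\boldsymbol{\theta}}(0)\|_\infty\le\vartheta$; then, using that orthogonal projection onto $\mathrm{row}(\boldsymbol{M})$ is non-expansive in the Euclidean norm to control $\Tilde{\boldsymbol{\theta}}_R(0)$, and using condition~(ii) (diagonality of $\boldsymbol{\Gamma}^{-1}$) to let a componentwise bound pass through the weighted quadratic form, one arrives at $V(0)\le\tfrac12\boldsymbol{\eta}(0)^T\boldsymbol{\Gamma}^{-1}\boldsymbol{\eta}(0)=:V_0$ for the explicit choice of $\boldsymbol{\eta}(0)$ stated in the corollary.

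Next I would invoke the comparison lemma: letting $W$ solve $\dot W=-c_1W^{1+1/\mu}-c_2W^{1-1/\mu}$ with $W(0)=V_0\ge V(0)$, one has $V(t)\le W(t)$ for all $t\ge0$, with $W$ — and hence $V$ — identically zero past $W$'s settling instant, a fact underwritten by the fixed-time structure of Theorem~\ref{Thm: FxTS Lyapunov Conditions} (the right-hand side vanishes at the origin and the trajectory cannot leave it). The comparison ODE is solved in closed form via the power substitution $z=W^{1/\mu}$, which collapses it to the separable, Riccati-type equation $\dot z=-\tfrac1\mu(c_1z^2+c_2)$ — exactly the $c_3=0$ instance of the integrand of Lemma~\ref{lem.robust_fxts}. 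Integrating yields an $\tan^{-1}$ relation that decreases affinely in $t$; inverting it and clamping its argument at zero to encode $W\equiv0$ for large $t$ reproduces the closed form \eqref{eq.err_bound} for the comparison solution, with $A(t)$ and $\Xi$ as in \eqref{eq.A(t)}. One then checks that the settling instant $t=\mu\Xi/\sqrt{c_1c_2}$ satisfies $t\le T$, since $\Xi\le\pi/2$ and $\sqrt{c_1c_2}=k_V^2\sqrt{ab}$ by \eqref{eq.c1_adaptation_law}--\eqref{eq.c2_adaptation_law}, which recovers $T$ in \eqref{eq: FxT Time Bound}.

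Finally I would translate the bound on $V$ back to a bound on $\|\Tilde{\boldsymbol{\theta}}_R\|_\infty$: from $V\ge\tfrac12\lambda_{min}(\boldsymbol{\Gamma}^{-1})\|\Tilde{\boldsymbol{\theta}}_R\|^2$ and $\lambda_{min}(\boldsymbol{\Gamma}^{-1})=1/\lambda_{max}(\boldsymbol{\Gamma})$ one gets $\|\Tilde{\boldsymbol{\theta}}_R(t)\|_\infty\le\|\Tilde{\boldsymbol{\theta}}_R(t)\|\le\sqrt{2\lambda_{max}(\boldsymbol{\Gamma})\,W(t)}$, which is exactly $\eta(t)$ of \eqref{eq.err_bound}. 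Monotonicity is then immediate: $A(t)$ is non-increasing (affine, then clamped at $0$), $\tan$ is increasing on $[0,\pi/2)$, and the remaining operations preserve order; and $\eta(t)=0$ for $t\ge T$ holds because $A\equiv0$ there, consistent with $\Tilde{\boldsymbol{\theta}}_R\to0$ from Theorem~\ref{Thm: FxT Parameter Adaptation}. I expect the only genuine obstacle to be the first step — turning the a priori set membership $\hat{\boldsymbol{\theta}}(0)\in\Theta$ into a clean, computable $V_0$, which is where hypotheses (i)--(ii) earn their keep and where the interplay between the Euclidean-norm contraction of the row-space projection and the $\infty$-norm claim must be handled carefully (up to a dimension factor absorbed into $\boldsymbol{\eta}(0)$) — together with a precise statement of the comparison principle, whose right-hand side is only locally Lipschitz away from the origin.
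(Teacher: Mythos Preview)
Your proposal is correct and follows essentially the same route as the paper: the paper likewise starts from the differential inequality \eqref{eq.vdot_proof}, integrates it via the substitution $x=V^{1/\mu}$ to obtain the $\tan^{-1}$ relation, bounds $V_0$ by $\tfrac12\boldsymbol{\eta}(0)^T\boldsymbol{\Gamma}^{-1}\boldsymbol{\eta}(0)$, introduces the clamping $A(t)=\max\{\cdot,0\}$, and then converts back to $\|\Tilde{\boldsymbol{\theta}}_R\|_\infty$ using diagonality of $\boldsymbol{\Gamma}$. Your explicit framing through the comparison lemma and your caution about the $V_0$ step are, if anything, more careful than the paper's presentation, which asserts $V_0\le\tfrac12\boldsymbol{\eta}(0)^T\boldsymbol{\Gamma}^{-1}\boldsymbol{\eta}(0)$ without addressing the row-space projection subtlety you flagged.
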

\begin{proof}
    Consider again the Lyapunov function candidate $V = \frac{1}{2}\Tilde{\boldsymbol{\theta}}_R ^T \Gamma ^{-1} \Tilde{\boldsymbol{\theta}}_R$, the time-derivative of which is given by \eqref{eq.vdot_proof}. Use a change of variables $x = V^{\frac{1}{\mu}}$ and $dx = \frac{1}{\mu}V^{\frac{1}{\mu}-1}dV$ to get
    \begin{align}
        t &\leq \int_{V_{0}^{\frac{1}{\mu}}}^{V^{\frac{1}{\mu}}} \frac{\mu x^{\mu - 1} dx}{-c_{1}x^{\mu-1}-c_{2}x^{\mu+1}} = \int_{V_{0}^{\frac{1}{\mu}}}^{V^{\frac{1}{\mu}}} \frac{\mu dx}{-c_{1}-c_{2}x^{2}}\nonumber \\
        &\leq-\frac{\mu}{\sqrt{c_{1}c_{2}}}\left (\tan^{-1}\left(\sqrt{\frac{c_2}{c_1}} V^{\frac{1}{\mu}}\right) -  \tan^{-1}\left(\sqrt{\frac{c_2}{c_1}} V_0^{\frac{1}{\mu}}\right) \right ) \nonumber.
    \end{align}
    By rearranging terms, 
    % $V(t) \leq \left (\sqrt{\frac{c_1}{c_2}} \tan\left (\tan^{-1}\left(\sqrt{\frac{c_2}{c_1}} V_0^{\frac{1}{\mu}}\right) -\frac{\sqrt{c_1c_2}}{\mu}t \right)\right)^{\mu}$,
    \begin{align}
        V(t) &\leq \left (\sqrt{\frac{c_1}{c_2}} \tan\left (\tan^{-1}\left(\sqrt{\frac{c_2}{c_1}} V_0^{\frac{1}{\mu}}\right) -\frac{\sqrt{c_1c_2}}{\mu}t \right)\right)^{\mu}, \nonumber
    \end{align}
    where, since $V_0 = \frac{1}{2}\Tilde{\boldsymbol{\theta}}_R(0)^T\Gamma^{-1}\Tilde{\boldsymbol{\theta}}_R(0) \leq \frac{1}{2}\boldsymbol{\eta}(0)^T\Gamma^{-1}\boldsymbol{\eta}(0)$, it follows that $V(t) \leq \left (\sqrt{\frac{c_1}{c_2}} \tan\left(\Xi -\frac{\sqrt{c_1c_2}}{\mu}t \right)\right)^{\mu}$.
    % \begin{align}
    %     V(t) &\leq \left (\sqrt{\frac{c_1}{c_2}} \tan\left(\Xi -\frac{\sqrt{c_1c_2}}{\mu}t \right)\right)^{\mu} \nonumber.
    % \end{align}
    Observe that $\Xi \rightarrow \frac{\pi}{2}$ as $\|\boldsymbol{\eta}(0)\| \rightarrow \infty$. With $\hat{\boldsymbol{\theta}}(0) \in \Theta \subset \mathbb R^p$, however, it will not occur that $V_0 \rightarrow \infty$, and therefore, since \eqref{eq: FxT Time Bound} holds for arbitrarily large $V_0$, it is possible that $\Xi -\frac{\sqrt{c_1c_2}}{\mu}t < 0$ for some $t < T$. Thus, define $A(t)$ by \eqref{eq.A(t)} and obtain $V(t) \leq \left (\sqrt{c_1/c_2} \tan\left(A(t)\right)\right)^{\mu}$.
    % \begin{equation}\label{eq: error bound derivation}
    %     V(t) \leq \left (\sqrt{\frac{c_1}{c_2}} \tan\left(A(t)\right)\right)^{\mu}.
    % \end{equation}
    Now, since $\Gamma$ is diagonal, $V = \frac{1}{2}(\Gamma^{-1}_{11}\Tilde{\theta}_{R,1}^2 + \cdots + \Gamma^{-1}_{pp}\Tilde{\theta}_{R,p}^2)$, and observe that $V \geq \frac{1}{2}\lambda^{-1}_{max}(\Gamma)\|\Tilde{\boldsymbol{\theta}}_R\|^2  \geq \frac{1}{2}\lambda^{-1}_{max}(\Gamma)\|\Tilde{\boldsymbol{\theta}}_R\|_{\infty}^2 $. Then, substitute $V(t)$ in this inequality and rearrange terms to recover \eqref{eq.err_bound}. Then, for $0 \leq t \leq \frac{\mu}{\sqrt{c_1c_2}}\textrm{tan}^{-1}\left(\sqrt{\frac{c_2}{c_1}}\left(\frac{1}{2}\boldsymbol{\eta}(0)^T\Gamma^{-1}\boldsymbol{\eta}(0)\right)^{\frac{1}{\mu}}\right)$
    % \begin{equation}
    %     0 \leq t \leq \frac{\mu}{\sqrt{c_1c_2}}\textrm{tan}^{-1}\left(\sqrt{\frac{c_2}{c_1}}\left(\frac{1}{2}\boldsymbol{\eta}(0)^T\Gamma^{-1}\boldsymbol{\eta}(0)\right)^{\frac{1}{\mu}}\right) \nonumber
    % \end{equation}
    we have that \eqref{eq.err_bound} decreases monotonically to zero. 
    % As such, we let $\Xi = \textrm{tan}^{-1}\left(\rho(\frac{1}{2}\boldsymbol{\eta}(0)^T\Gamma^{-1}\boldsymbol{\eta}(0))^{\frac{1}{\mu}}\right)$ and obtain \eqref{tighter t bound}. 
    This completes the proof.
\end{proof}

\begin{Remark}
    % Since it is assumed that $\boldsymbol{\theta}^* \in \Theta$, 
    Recalling the definition of $\vartheta(t)$, from \eqref{eq.err_bound} define $\Lambda(t) = \{\boldsymbol{\lambda} \in \Theta : \|\boldsymbol{\lambda}_R - \hat{\boldsymbol{\theta}}_R\|_\infty \leq \eta(t)\}$ with $\boldsymbol{\lambda}=\boldsymbol{\lambda}_R + \boldsymbol{\lambda}_N$ for $\boldsymbol{\lambda}_R,\hat{\boldsymbol{\theta}}_R\in\mathcal{R}(\Delta)$ such that $\Lambda(t) \subset \Theta$ and $\boldsymbol{\theta}^* \in \Lambda(t)$. Note that $\Lambda(t) = \{\hat{\boldsymbol{\theta}} + \mathcal{N}(\Delta)\}$ for all $t \in (T, \infty)$, and that for $\Delta$ full column rank $\Lambda(t) = \{\hat{\boldsymbol{\theta}}\}$ for all $t \in [T, \infty)$.
    % , where $\Lambda(t) = \{\boldsymbol{\lambda} \in \Theta : \|\boldsymbol{\lambda}_R - \hat{\boldsymbol{\theta}}_R\|_\infty \leq \eta(t)\}$, for all $t \in [0,T]$, 
    % with $\boldsymbol{\lambda}=\boldsymbol{\lambda}_R + \boldsymbol{\lambda}_N$, with $\boldsymbol{\lambda}_R,\hat{\boldsymbol{\theta}}_R\in\mathcal{R}(\Delta)$, $\boldsymbol{\lambda}_N,\hat{\boldsymbol{\theta}}_N\in\mathcal{N}(\Delta)$, and $\Lambda(t) = \{\hat{\boldsymbol{\theta}} + \mathcal{N}(\Delta)\}$ for all $t \in (T, \infty)$. 
    % % As a special case, 
    % If $\Delta$ is full column rank, then $\Lambda(t) = \{\hat{\boldsymbol{\theta}}\}$ for all $t \in [T, \infty)$.
\end{Remark}

The following section analyzes the robustness of \eqref{eq: FxT Estimation Law} and shows how it may be used to synthesize a safe control law.

% In the following section, the robustness of adaptation law \eqref{eq: FxT Estimation Law} is formalized, and it is shown how it may be used to synthesize a safe control law under model uncertainty.

%%%%%%%%%%%%%%%%%%%%%%%%%%%%%%%%%%%%%%%%%%%%%%%%%%%%%%%%%%%%%%%%%%%%%%%%%%%%%%%%
%*************************** Robust, Adaptive Safety **************************%
%%%%%%%%%%%%%%%%%%%%%%%%%%%%%%%%%%%%%%%%%%%%%%%%%%%%%%%%%%%%%%%%%%%%%%%%%%%%%%%%
\section{Robust Adaptive Safety}\label{sec: Robust Adaptive Safety}
Consider the system \eqref{uncertain system}, now subject to an additional, unknown, possibly time-varying disturbance $d: \mathbb R^n \times \mathbb R_+ \mapsto \mathcal{D} \subset \mathbb{R}^n$ that captures unmodeled effects and is known to satisfy $\sup_{t \geq 0}\|d(\boldsymbol{x}(t),t)\|_\infty \leq B_D < \infty$:
\begin{equation}\label{eq.perturbed_nonlinear_system}
    \dot{\boldsymbol{x}} = f(\boldsymbol{x}) + g(\boldsymbol{x})\boldsymbol{u} + \Delta(\boldsymbol{x})\boldsymbol{\theta}^* + d(\boldsymbol{x},t).
\end{equation}
When using rate measurements as in Remark \ref{rmk.schemes}, it is not possible to obtain a relation of the form \eqref{eq.es_matrix} from \eqref{eq.perturbed_nonlinear_system},
% with $d$ unknown, e.g., $\Delta(\boldsymbol{x})\Tilde{\boldsymbol{\theta}} = \dot{\boldsymbol{x}} - f(\boldsymbol{x}) - g(\boldsymbol{x})\boldsymbol{u} - \Delta(\boldsymbol{x})\hat{\boldsymbol{\theta}} -d(\boldsymbol{x}, t)$ takes the form \eqref{eq.perturbed_mv}. 
i.e., 
\begin{align*}
    \Delta(\boldsymbol{x}) \Tilde{\boldsymbol{\theta}} = \underbrace{\dot{\boldsymbol{x}} - f(\boldsymbol{x}) - g(\boldsymbol{x})\boldsymbol{u} - \Delta(\boldsymbol{x})\hat{\boldsymbol{\theta}}}_\text{Known} -\underbrace{d(\boldsymbol{x}, t)}_\text{Unknown},
\end{align*}
which instead takes the form \eqref{eq.perturbed_mv}. As such, consider the following.
% Instead, for rate measurements we have that 
% \begin{align}\label{eq: rate v=M0+d}
%     \underbrace{\dot{\boldsymbol{x}} - f(\boldsymbol{x}) - g(\boldsymbol{x})\boldsymbol{u} - \Delta(\boldsymbol{x})\hat{\boldsymbol{\theta}}}_\text{$\boldsymbol{v}$} = \underbrace{\Delta(\boldsymbol{x})}_\text{$\boldsymbol{M}$}\Tilde{\boldsymbol{\theta}} + \underbrace{d(\boldsymbol{x},t)}_\text{$\boldsymbol{\delta}$},
% \end{align}
% and for the state predictor scheme,
% \begin{equation}
%      \dot{\boldsymbol{\xi}} = -k\boldsymbol{\xi} + d(t,\boldsymbol{x}), \quad \boldsymbol{\xi}(0) = \boldsymbol{e}(0),
% \end{equation}
% which implies that $\boldsymbol{e}=\boldsymbol{W}\Tilde{\boldsymbol{\theta}} + \boldsymbol{\alpha}$ for all $t > 0$, where $\boldsymbol{\alpha} \in \mathbb R^n$ is unknown. We observe that these new relations may be expressed in the form \eqref{eq.perturbed_mv},
% We therefore generalize these new relations as
% \begin{equation}\label{eq.vM0d}
%     \boldsymbol{v} = \boldsymbol{M}\Tilde{\boldsymbol{\theta}} + \boldsymbol{\delta},
% \end{equation}
\begin{Assumption}\label{ass: robust MV}
    For all $t \geq 0$ there is a known vector, $\boldsymbol{v}(t) \in \mathbb R^n$, a known matrix, $\boldsymbol{M}(t) \in \mathbb R^{n \times p}$, and an unknown vector, $\boldsymbol{\delta}(t)$ that jointly satisfy the following properties:
    \vspace{-3mm}
    \begin{enumerate}[label=(\roman*)]
        \item the parameter error, $\Tilde{\boldsymbol{\theta}}(t)$, is one solution to \eqref{eq.perturbed_mv},
        \item there exists known $\Upsilon < \infty$ such that $\sup_{t \in \mathbb R_{\geq 0}}\|\delta(t)\| \leq \Upsilon$
        % \begin{equation}
        %     \sup_{t \in \mathbb R_{\geq 0}}\|\delta(t)\| \leq \Upsilon, \nonumber
        % \end{equation}
        \item and items (ii) and (iii) of Assumption \ref{ass: v=M0} hold.
    \end{enumerate}
    \vspace{-3mm}
\end{Assumption}
Part (ii) is reasonable given that $d$ in \eqref{eq.perturbed_nonlinear_system} is bounded, i.e., for $\delta(t) = d(\boldsymbol{x}(t),t)$, $\Upsilon = B_D$. Observe also that the above Assumption is valid for systems of the form \eqref{uncertain system} subject to noisy measurements, i.e., 
\begin{align*}
    \Delta(\boldsymbol{y}) \Tilde{\boldsymbol{\theta}} = \underbrace{\dot{\boldsymbol{y}} - f(\boldsymbol{y}) - g(\boldsymbol{y})\boldsymbol{u} - \Delta(\boldsymbol{y})\hat{\boldsymbol{\theta}}}_\text{Known} -\underbrace{\delta(t)}_\text{Unknown},
\end{align*}
where $\boldsymbol{y}$ and $\dot{\boldsymbol{y}}$ denote the measurements of $\boldsymbol{x}$ and $\dot{\boldsymbol{x}}$, and $\delta(t) = (\dot{\boldsymbol{y}} - \dot{\boldsymbol{x}}) + (f(\boldsymbol{y}) -f(\boldsymbol{x})) + (g(\boldsymbol{y}) - g(\boldsymbol{x}))\boldsymbol{u} + (\Delta(\boldsymbol{y}) - \Delta(\boldsymbol{x}))\hat{\boldsymbol{\theta}} + (\Delta(\boldsymbol{y}) - \Delta(\boldsymbol{x}))\Tilde{\boldsymbol{\theta}} + d(\boldsymbol{x}, t)$, provided that a bound on $\boldsymbol{x} - \boldsymbol{y}$ and $\dot{\boldsymbol{x}} - \dot{\boldsymbol{y}}$ is known. For classes of bounded-error observers (e.g., \cite{pylorof2019design,jaulin2002nonlinear}), this is reasonable.
% , when $\boldsymbol{y} - \boldsymbol{x}$ is bounded.

The following Lemma is a requirement for the proof of the subsequent result on the robustness of adaptation law \eqref{eq: FxT Estimation Law}.
\begin{Lemma}\label{lem.proof_function}
    Suppose that $\boldsymbol{x},\boldsymbol{y} \in \mathbb R^n$, and define $P(\boldsymbol{x},\boldsymbol{y}) \triangleq \boldsymbol{x}^T\boldsymbol{y}\left(a\|\boldsymbol{x}+\boldsymbol{y}\|^{\frac{2}{m}} + b\|\boldsymbol{x}+\boldsymbol{y}\|^{\frac{-2}{m}}\right)$ for $a,b>0$, $m>2$.
    % \begin{equation}\label{eq.Pxy}
    %     P(\boldsymbol{x},\boldsymbol{y}) \coloneqq \boldsymbol{x}^T\boldsymbol{y}\left(a\|\boldsymbol{x}+\boldsymbol{y}\|^{\frac{2}{m}} + b\|\boldsymbol{x}+\boldsymbol{y}\|^{\frac{-2}{m}}\right).
    % \end{equation}
    If $\exists B_y>0$ such that $\|\boldsymbol{y}\|\leq B_y$ and $\|\boldsymbol{x}\|> 2B_y$, then $P(\boldsymbol{x},\boldsymbol{y}) \geq  -B_y\left(a\|\boldsymbol{x}\|^{1+\frac{2}{m}} + 2^{\frac{2}{m}}b\|\boldsymbol{x}\|^{1-\frac{2}{m}}\right)$.
    % \begin{equation}\label{eq.P_bound}
    %     P(\boldsymbol{x},\boldsymbol{y}) \geq  -B_y\left(a\|\boldsymbol{x}\|^{1+\frac{2}{m}} + 2^{\frac{2}{m}}b\|\boldsymbol{x}\|^{1-\frac{2}{m}}\right).
    % \end{equation}
\end{Lemma}
\vspace{-7mm}
\begin{proof}
    Omitted due to space.
    % Provided in Appendix \ref{app.lemma_proof}.
\end{proof}
% We are now ready to present a result on the robustness of the FxTS adaptation law proposed in \eqref{eq: FxT Estimation Law}.
\begin{Theorem}\label{thm.robust_fxts}
    Consider a dynamical system of the form \eqref{eq.perturbed_nonlinear_system}. Suppose that Assumption \ref{ass: robust MV} holds, and that $\mathcal{N}(\Delta(\boldsymbol{x}(t))) = \mathcal{N}(\Delta(\boldsymbol{x}(0)))$, $\forall t \in [0,T]$. Then, under the adaptation law \eqref{eq: FxT Estimation Law}, with $\lambda_{min}(\boldsymbol{\Gamma}) \geq 2(\frac{\Upsilon}{\underaccent{\bar}{\sigma}(\boldsymbol{M},T)})^2$, there exist neighborhoods $D_0$ and $D$ of $\Omega$ such that for all $\hat{\boldsymbol{\theta}}(0) \in D_0$, the trajectories of \eqref{eq.err_tilde_R} satisfy $\Tilde{\boldsymbol{\theta}}_R(t) \in D_0$ for all $t\geq 0$, and reach $D$ within a fixed time $T$, where $D = \{\Tilde{\boldsymbol{\theta}}_R \; | \; V(\Tilde{\boldsymbol{\theta}}_R) \leq 1\}$, and
    \begin{align}
        D_0 & = \begin{cases}\mathbb R^p; & \Upsilon < Y, \\
        \left\{\Tilde{\boldsymbol{\theta}}_R\; |\; V(\Tilde{\boldsymbol{\theta}}_R)\leq \left(k\frac{\alpha_3-\sqrt{\alpha_3^2-4\alpha_1\alpha_2}}{2\alpha_1}\right)^{\mu}\right\}; & \Upsilon \geq Y,
        \end{cases}\label{eq.robust_fxts_D0}\\
        T & \leq \begin{cases}\frac{\mu}{\alpha_1k_1}\left(\frac{\pi}{2}-\tan^{-1}k_2\right); & \Upsilon<Y, \\
        \frac{\mu k}{(1-k)\sqrt{\alpha_1\alpha_2}}; & \Upsilon \geq Y,
        \end{cases}\label{eq.robust_fxts_T}
    \end{align}
    where $\mu > 2$, $0<k<1$, $k_1 = \sqrt{(4\alpha_1\alpha_2-\alpha_3^2)/4c_1^2}$, and $k_2 = (2\alpha_1-\alpha_3)/\sqrt{4\alpha_1\alpha_2 - \alpha_3^2}$, with
    \begin{align}
        \alpha_1 &= 2^{\frac{-2}{\mu}}ak_V^{2+\frac{2}{\mu}},  \label{eq.a1_robust_adaptation} \\
        \alpha_2 &= 2^{\frac{2}{\mu}}bk_V^{2 - \frac{2}{\mu}},  \label{eq.a2_robust_adaptation} \\
        \alpha_3 &= a\Upsilon k_V^{1+\frac{2}{\mu}} + 2^{\frac{2}{\mu}}b\Upsilon k_V^{1-\frac{2}{\mu}}, \label{eq.a3_robust_adaptation}
    \end{align}
    % \begin{align}
    %     \alpha_1 &= 2a\Upsilon\lambda_{max}(\boldsymbol{\Gamma})\sigma_{max}(\boldsymbol{M}), \label{eq.c3_robust_adaptation} \\
    %     \alpha_2 &=  b\Upsilon, \label{eq.c4_robust_adaptation}
    % \end{align}
    with $k_V$ defined by \eqref{eq: kV},
    % $V_1^*=\Big(k\frac{c_3+c_4-\sqrt{(c_3+c_4)^2 -4c_1c_2}}{2c_1}\Big)^\mu$, 
    and 
    \begin{align}
        Y &= 2\frac{k_V^2\sqrt{ab}}{ak_V^{1+\frac{2}{\mu}} + 2^{\frac{2}{\mu}}b k_V^{1-\frac{2}{\mu}}}, \label{eq.Y_robust_adaptation}
    \end{align}
\end{Theorem}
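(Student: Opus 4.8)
The plan is to follow the architecture of the proof of Theorem~\ref{Thm: FxT Parameter Adaptation}, tracking the extra term produced by $\boldsymbol\delta$ and confining the whole estimate to the exterior of the target sublevel set $D$. First I would reuse the row/nullspace decomposition $\Tilde{\boldsymbol\theta}=\Tilde{\boldsymbol\theta}_R+\Tilde{\boldsymbol\theta}_N$: since $\boldsymbol{v}=\boldsymbol{M}\Tilde{\boldsymbol\theta}_R+\boldsymbol\delta$ still gives $\boldsymbol{M}^T\boldsymbol{v}\in\textrm{row}(\boldsymbol{M})$, the same computation as before yields $\dot{\hat{\boldsymbol\theta}}_N=\boldsymbol{0}$, so $\Tilde{\boldsymbol\theta}_N$ stays fixed in $\mathcal{N}(\Delta(\boldsymbol{x}(t)))$ (using $\mathcal{N}(\Delta(\boldsymbol{x}(t)))=\mathcal{N}(\Delta(\boldsymbol{x}(0)))$ on $[0,T]$), and only \eqref{eq.err_tilde_R}, now driven by $\boldsymbol{v}=\boldsymbol{M}\Tilde{\boldsymbol\theta}_R+\boldsymbol\delta$, matters.

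Next, using the same Lyapunov candidate $V=\tfrac12\Tilde{\boldsymbol\theta}_R^T\boldsymbol\Gamma^{-1}\Tilde{\boldsymbol\theta}_R$ and writing $\boldsymbol{x}\coloneqq\boldsymbol{M}\Tilde{\boldsymbol\theta}_R$, $\boldsymbol{y}\coloneqq\boldsymbol\delta$, I would expand $\dot V=-\bigl(\|\boldsymbol{x}\|^2+\boldsymbol{x}^T\boldsymbol{y}\bigr)\bigl(a\|\boldsymbol{x}+\boldsymbol{y}\|^{2/\mu}+b\|\boldsymbol{x}+\boldsymbol{y}\|^{-2/\mu}\bigr)$ and split it into a term $-\|\boldsymbol{x}\|^2(\,\cdot\,)$ minus the function $P(\boldsymbol{x},\boldsymbol{y})$ of Lemma~\ref{lem.proof_function} (with $m=\mu$). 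The key observation is that the gain condition $\lambda_{min}(\boldsymbol\Gamma)\geq 2\bigl(\Upsilon/\underaccent{\bar}{\sigma}(\boldsymbol{M})\bigr)^2$ forces, on $\{V\geq 1\}$, $\|\boldsymbol{x}\|\geq\underaccent{\bar}{\sigma}(\boldsymbol{M})\|\Tilde{\boldsymbol\theta}_R\|\geq\underaccent{\bar}{\sigma}(\boldsymbol{M})\sqrt{2\lambda_{min}(\boldsymbol\Gamma)V}\geq 2\Upsilon\geq 2\|\boldsymbol{y}\|$, which is exactly the regime where Lemma~\ref{lem.proof_function} applies and where $\tfrac12\|\boldsymbol{x}\|\leq\|\boldsymbol{x}+\boldsymbol{y}\|\leq 2\|\boldsymbol{x}\|$. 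Upper-bounding $-P(\boldsymbol{x},\boldsymbol{y})$ via that lemma, bounding the remaining term with those norm inequalities, and converting $\|\boldsymbol{x}\|$ to $V$ through $k_V$ as in \eqref{eq.vdot_proof}, I expect to land, on $\{V\geq 1\}$, on an inequality $\dot V\leq -\alpha_1 V^{1+1/\mu}-\alpha_2 V^{1-1/\mu}$ plus two residual terms carrying exponents $1/2+1/\mu$ and $1/2-1/\mu$ in $V$, with $\alpha_1,\alpha_2$ as in \eqref{eq.a1_robust_adaptation}--\eqref{eq.a2_robust_adaptation}. Since $\mu>2$ both residual exponents are $<1$, so on $\{V\geq 1\}$ each residual term is dominated by $V$; collecting their coefficients produces $\dot V\leq -\alpha_1 V^{1+1/\mu}-\alpha_2 V^{1-1/\mu}+\alpha_3 V$ with $\alpha_3$ as in \eqref{eq.a3_robust_adaptation}.

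From here the argument is bookkeeping against Lemma~\ref{lem.robust_fxts}. For $V_0\leq 1$ the claim is immediate; for $V_0>1$, integrating the differential inequality from $V_0$ down to $\bar V=1$ bounds the time to reach $D=\{V\leq 1\}$ by the integral $I$ of \eqref{eq.fxts_integral} with $(c_1,c_2,c_3)=(\alpha_1,\alpha_2,\alpha_3)$. A short computation gives $2\sqrt{\alpha_1\alpha_2}=2k_V^2\sqrt{ab}$, so $\alpha_3<2\sqrt{\alpha_1\alpha_2}$ is precisely $\Upsilon<Y$ for $Y$ in \eqref{eq.Y_robust_adaptation}, and the two cases of Lemma~\ref{lem.robust_fxts} give the two branches of \eqref{eq.robust_fxts_T}. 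For forward invariance of $D_0$: when $\Upsilon<Y$, AM--GM gives $\alpha_1 V^{1/\mu}+\alpha_2 V^{-1/\mu}\geq 2\sqrt{\alpha_1\alpha_2}>\alpha_3$, hence $\dot V<0$ on all of $\{V\geq1\}$ and $D_0=\mathbb R^p$ is trivially invariant; when $\Upsilon\geq Y$, the quadratic $\alpha_1 u^2-\alpha_3 u+\alpha_2$ in $u=V^{1/\mu}$ is positive for $u$ below its smaller root, so $\dot V<0$ on the annulus $\{1\leq V\leq (k\tfrac{\alpha_3-\sqrt{\alpha_3^2-4\alpha_1\alpha_2}}{2\alpha_1})^\mu\}=D_0\setminus\textrm{int}(D)$, whence $V$ is non-increasing there, $D_0$ is invariant and contains $D$, and every $V_0\in D_0$ satisfies the hypothesis $1\leq V_0^{1/\mu}\leq k\tfrac{\alpha_3-\sqrt{\alpha_3^2-4\alpha_1\alpha_2}}{2\alpha_1}$ of Lemma~\ref{lem.robust_fxts}(ii), which supplies the stated $T$. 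Finally, $\Tilde{\boldsymbol\theta}_R\in D$ together with $\Tilde{\boldsymbol\theta}_N\in\mathcal{N}(\Delta)$ being fixed identifies $D$ and $D_0$ as neighborhoods of $\Omega$.

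The step I expect to be the main obstacle is the $\dot V$ estimate of the second paragraph. Two things must line up there: the application of Lemma~\ref{lem.proof_function}, which is only legitimate once $\|\boldsymbol{x}\|>2\Upsilon$ has been secured --- this is exactly why the target cannot be the origin but must be the sublevel set $D$, and why the lower bound on $\lambda_{min}(\boldsymbol\Gamma)$ is imposed --- together with the fact that $\mu>2$ is precisely what collapses the perturbation-induced powers $V^{1/2\pm 1/\mu}$ into the $\alpha_3 V$ term on $\{V\geq 1\}$; once both are in place the remainder reduces mechanically to Lemma~\ref{lem.robust_fxts}.
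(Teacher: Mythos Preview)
Your proposal follows essentially the same route as the paper's own proof: the same Lyapunov candidate $V=\tfrac12\Tilde{\boldsymbol\theta}_R^T\boldsymbol\Gamma^{-1}\Tilde{\boldsymbol\theta}_R$, the same row/nullspace reduction, the same splitting of $\dot V$ into the $-\|\boldsymbol{M}\Tilde{\boldsymbol\theta}_R\|^2(\cdot)$ piece and the cross term bounded via Lemma~\ref{lem.proof_function}, the same use of the gain condition to ensure $\|\boldsymbol{M}\Tilde{\boldsymbol\theta}_R\|>2\Upsilon$ on $\{V\geq 1\}$, the same collapse to $\dot V\leq -\alpha_1V^{1+1/\mu}-\alpha_2V^{1-1/\mu}+\alpha_3V$ there, and the same appeal to Lemma~\ref{lem.robust_fxts}. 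The only difference is cosmetic: the paper obtains the forward-invariance of $D_0$ and the two-branch time bound by invoking \cite[Theorem~1]{Garg2021Characterization}, whereas you write out the AM--GM and quadratic-root argument for $\dot V<0$ on $D_0\setminus\textrm{int}(D)$ explicitly.
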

\begin{proof}
    Consider the Lyapunov function candidate $V = \frac{1}{2} \Tilde{\boldsymbol{\theta}}_R^T \boldsymbol{\Gamma}^{-1} \Tilde{\boldsymbol{\theta}}_R$. Let $\boldsymbol{q} = \boldsymbol{M}\Tilde{\boldsymbol{\theta}}_R$. Under \eqref{eq: FxT Estimation Law}, $\dot{V} = -\boldsymbol{q}^T\left(\boldsymbol{q} + \boldsymbol{\delta}\right)\\ \left(a\|\boldsymbol{q} + \boldsymbol{\delta}\|^{\frac{2}{\mu}} + b\|\boldsymbol{q} + \boldsymbol{\delta}\|^{-\frac{2}{\mu}}\right)$,
    % \begin{equation}
    %     \dot{V} = -\boldsymbol{q}^T\left(\boldsymbol{q} + \boldsymbol{\delta}\right)\left(a\|\boldsymbol{q} + \boldsymbol{\delta}\|^{\frac{2}{\mu}} + \frac{b}{\|\boldsymbol{q} + \boldsymbol{\delta}\|^{\frac{2}{\mu}}}\right) \nonumber,
    % \end{equation}
    where $a,b>0$ and $\mu > 2$. By the vector inner product
    $\boldsymbol{q}^T\boldsymbol{q} > |\boldsymbol{q}^T\boldsymbol{\delta}|$
    as long as $\|\boldsymbol{q}\| > \|\boldsymbol{\delta}\|$. Thus, when $\|\boldsymbol{q}\| > \|\boldsymbol{\delta}\|$ it holds that $\boldsymbol{q}^T(\boldsymbol{q} + \boldsymbol{\delta}) > 0$ and therefore that $\dot{V} < 0$. By Assumption \ref{ass: robust MV} and the vector triangle inequality $\|\boldsymbol{q}+\boldsymbol{\delta}\|>\Upsilon$ whenever $\|\boldsymbol{q}\| > 2\Upsilon$. 
    % Thus $\|\boldsymbol{q}\| > 2\Upsilon$ is sufficient to have $\dot{V}<0$. 
    Then, $\|\boldsymbol{q}+\boldsymbol{\delta}\|>\frac{1}{2}\|\boldsymbol{q}\|$ for all $\|\boldsymbol{q}\|>2\Upsilon$. As such, from Theorem \ref{Thm: FxT Parameter Adaptation}, whenever $\|\boldsymbol{q}\| > 2\Upsilon$ the following holds:
    \begin{align}
        \dot{V} \leq &-\boldsymbol{q}^T\boldsymbol{q}\left(a\big(\|\boldsymbol{q}\|/2\big)^{\frac{2}{\mu}} + b\big(\|\boldsymbol{q}\|/2\big)^{-\frac{2}{\mu}}\right) - F(\boldsymbol{q},\boldsymbol{\delta}) \nonumber \\
        \leq &-\alpha_1V^{1+\frac{1}{\mu}} - \alpha_2V^{1-\frac{1}{\mu}} - F(\boldsymbol{q},\boldsymbol{\delta}) < 0, \nonumber
    \end{align}
    where $\alpha_1$ and $\alpha_2$ are given by \eqref{eq.a1_robust_adaptation} and \eqref{eq.a2_robust_adaptation}, and $F(\boldsymbol{q},\boldsymbol{\delta}) =  \boldsymbol{q}^T\boldsymbol{\delta}\left(a\|\boldsymbol{q}+\boldsymbol{\delta}\|^{\frac{2}{\mu}} + b\|\boldsymbol{q}+\boldsymbol{\delta}\|^{-\frac{2}{\mu}}\right)$. Using Lemma \ref{lem.proof_function}, $F(\boldsymbol{q},\boldsymbol{\delta}) \geq-\Upsilon \left(a\|\boldsymbol{q}\|^{1+\frac{2}{\mu}}+2^{\frac{2}{\mu}}b\|\boldsymbol{q}\|^{1-\frac{2}{\mu}}\right)$,
    % \begin{equation}
    %     F(\boldsymbol{q},\boldsymbol{\delta}) \geq-\Upsilon \left(a\|\boldsymbol{q}\|^{1+\frac{2}{\mu}}+2^{\frac{2}{\mu}}b\|\boldsymbol{q}\|^{1-\frac{2}{\mu}}\right), \nonumber
    % \end{equation}
    % where 
    % \begin{equation}
    %     P(\theta_{q\delta}) = P_1(\theta_{q\delta}) + P_2(\theta_{q\delta}), \nonumber
    % \end{equation}
    % and
    % \begin{align}
    %     P_1(\theta_{q\delta}) &= a\|\boldsymbol{\delta}\| \|\boldsymbol{q}\|\cos\left(\theta_{q\delta}\right)\cdot \nonumber \\
    %     &\left(\|\boldsymbol{q}\|^2+\|\boldsymbol{\delta}\|^2 + 2\|\boldsymbol{q}\|\|\boldsymbol{\delta}\|\cos(\theta_{q\delta})\right)^{\frac{1}{\mu}} \nonumber \\
    %     P_2(\theta_{q\delta}) &=\frac{b\|\boldsymbol{\delta}\| \|\boldsymbol{q}\|\cos\left(\theta_{q\delta}\right)}{\left(\|\boldsymbol{q}\|^2+\|\boldsymbol{\delta}\|^2 + 2\|\boldsymbol{q}\|\|\boldsymbol{\delta}\|\cos(\theta_{q\delta})\right)^{\frac{1}{\mu}}}. \nonumber
    % \end{align}
    % Thus, from Lemma \ref{} we have that
    % \begin{align}
    %     \min_{\theta_{q\delta}}P_1(\theta_{q\delta}) &\geq -a\Upsilon\|\boldsymbol{q}\|^{1+\frac{2}{\mu}},\nonumber \\
    %     \min_{\theta_{q\delta}}P_2(\theta_{q\delta}) &\geq -2^{\frac{2}{\mu}}b\Upsilon \|\boldsymbol{q}\|^{1+\frac{2}{\mu}}, \nonumber
    % \end{align}
    % and therefore that
    % \begin{equation}
    %     F(\boldsymbol{q},\boldsymbol{\delta}) \geq -\Upsilon \left(a\|\boldsymbol{q}\|^{1+\frac{2}{\mu}}+2^{\frac{2}{\mu}}b\|\boldsymbol{q}\|^{1-\frac{2}{\mu}}\right), \nonumber
    % \end{equation}
    and therefore
    \begin{align}
        \dot{V} &\leq -\alpha_1V^{1+\frac{1}{\mu}} - \alpha_2V^{1-\frac{1}{\mu}} + a\Upsilon\|\boldsymbol{q}\|^{1+\frac{2}{\mu}} + 2^{\frac{2}{\mu}}b\Upsilon\|\boldsymbol{q}\|^{1-\frac{2}{\mu}}, \nonumber \\
        &\leq -\alpha_1V^{1+\frac{1}{\mu}} - \alpha_2V^{1-\frac{1}{\mu}} + \beta_1V^{\frac{1}{2} + \frac{1}{\mu}} + \beta_2V^{\frac{1}{2} - \frac{1}{\mu}}, \nonumber
    \end{align}
    where $\beta_1=a\Upsilon k_V^{1+\frac{2}{\mu}}$ and $\beta_2=2^{\frac{2}{\mu}}b\Upsilon k_V^{1-\frac{2}{\mu}}$, and which, for all $V\geq 1$, obeys
    \begin{equation}
        \dot{V} \leq -\alpha_1V^{1+\frac{1}{\mu}} - \alpha_2V^{1-\frac{1}{\mu}} + \alpha_3V, \label{eq.robust_V_proof}
    \end{equation}
    where $\alpha_3 = \beta_1+\beta_2$.  Now, since $\frac{1}{2}\lambda_{max}^{-1}(\boldsymbol{\Gamma})\|\Tilde{\boldsymbol{\theta}}\|^2\leq V$, and $\underaccent{\bar}{\sigma}^2(\boldsymbol{M},T)\|\Tilde{\boldsymbol{\theta}}\|^2\leq \|\boldsymbol{q}\|^2$, it is true that $V(\Tilde{\boldsymbol{\theta}}_R) \leq \frac{1}{2}g_1\|\boldsymbol{q}\|^2=V_{max}(\Tilde{\boldsymbol{\theta}}_R)$,
    % \begin{equation}
    %     V_{min}(\Tilde{\boldsymbol{\theta}}_R)=\frac{1}{2}g_1\|\boldsymbol{q}\|^2\leq V(\Tilde{\boldsymbol{\theta}}_R) \leq \frac{1}{2}g_2\|\boldsymbol{q}\|^2=V_{max}(\Tilde{\boldsymbol{\theta}}_R), \nonumber
    % \end{equation}
    where 
    % $g_1=(\lambda_{max}(\boldsymbol{\Gamma})\sigma_{max}^2(\boldsymbol{M}))^{-1}$ and 
    $g_1=(\lambda_{min}(\boldsymbol{\Gamma})\underaccent{\bar}{\sigma}^2(\boldsymbol{M},T))^{-1}$. Then, since \eqref{eq.robust_V_proof} holds for all $V \geq 1$ and it has been shown that $\dot{V}<0$ for all $\|\boldsymbol{q}\| > 2\Upsilon$, it follows that $\dot{V}<0$ for all $V> 1$ if $V_{max}(\Tilde{\boldsymbol{\theta}}_R) \leq 1$ when $\|\boldsymbol{q}\| = 2\Upsilon$. Thus, it must hold that $\lambda_{min}(\boldsymbol{\Gamma}) \geq 2(\Upsilon/\underaccent{\bar}{\sigma}(\boldsymbol{M},T))^2$.
    
    % \textbf{Correct up to here}.
    
    Since $\forall\|\boldsymbol{q}\| > 2\Upsilon$, both $\dot{V} < 0$ and \eqref{eq.robust_V_proof} hold, it follows from \cite[Lemma 2]{Garg2021Characterization} and \cite[Theorem 1]{Garg2021Characterization} that $\forall \Tilde{\boldsymbol{\theta}}_R(0) \in D_0$, the trajectories of \eqref{eq.err_tilde_R} satisfy $\Tilde{\boldsymbol{\theta}}_R(t) \in D_0$ for all $t\geq 0$, and reach the set $D$ within a fixed time $T$.
    % , where the terms $\alpha_1,\alpha_2,\alpha_3$ express the conditions $r<1,r\geq 1$ from \cite[Theorem 1]{Garg2021Characterization} as $\Upsilon < Y, \Upsilon \geq Y$, where $Y$ is given by \eqref{eq.Y_robust_adaptation}. 
    This completes the proof. 
\end{proof}

In what follows, the robust, adaptive CBF condition for forward-invariance of set $S$ is formalized.
\begin{Theorem}\label{Thm: Adaptive Safety}
    Consider a dynamical system of the form \eqref{uncertain system}, and the set $S$ defined by \eqref{eq.safe_set}. Under the premises of Corollary \ref{Cor: upper bound parameter error}, the set $S$ is rendered forward-invariant if the parameter estimates, $\hat{\boldsymbol{\theta}}$, are adapted via \eqref{eq: FxT Estimation Law} and there exists a control input, $\boldsymbol{u}$, for which the following holds $\forall \boldsymbol{x} \in S$, $\forall t \geq 0$:
    \begin{equation}\label{RaCBF Condition}
            \sup_{\boldsymbol{u} \in \mathcal U}\big[ L_fh(\boldsymbol{x}) + L_gh(\boldsymbol{x})\boldsymbol{u}\big] \geq -\alpha \left(h(\boldsymbol{x}) - m(\boldsymbol{\eta}) \right) + \boldsymbol{r}(t,\hat{\boldsymbol{\theta}})
    \end{equation}
    where $\boldsymbol{r}(t,\hat{\boldsymbol{\theta}}) = \boldsymbol{\nu}(\hat{\boldsymbol{\theta}}) + \dot m(\boldsymbol{\eta})$ consists of a robust term 
    % protecting against the worst admissible estimation error, i.e., 
    \begin{equation}\label{eq: safety projection}
        \resizebox{.875\hsize}{!}{$\boldsymbol{\nu}(\hat{\boldsymbol{\theta}}) = \Sigma_{i=1}^p  \min\left\{C_i\max\left\{\theta_{min},\hat{\theta}_i - \eta\right\},C_i\min\left\{\hat{\theta}_i - \eta,\theta_{max}\right\}\right\}$},
        % \boldsymbol{\nu}(\hat{\boldsymbol{\theta}}) = \sum_{i=1}^p  \min\left\{C_i\max\left\{\theta_{min},\hat{\theta}_i - \eta\right\},C_i\min\left\{\hat{\theta}_i - \eta,\theta_{max}\right\}\right\},
    \end{equation}
    % \begin{align}
    %     \boldsymbol{\nu}(\hat{\boldsymbol{\theta}}) = \sum_{i=1}^p  \min\left\{C_i\max\left\{\theta_{min},\hat{\theta}_i - \eta\right\},C_i\min\left\{\hat{\theta}_i - \eta,\theta_{max}\right\}\right\}, \label{eq: safety projection}
    % \end{align}
    and an adaptive term 
    % compensating for how that error is changing with time, i.e.,
    \begin{equation}\label{eq: eta dot}
        \dot{\eta}(t) = -c_{1}\sqrt{\frac{ \lambda_{max}(\boldsymbol{\Gamma})}{2}}\left(\sqrt{\frac{c_1}{c_2}}\tan\left(A(t)\right)\right)^{\frac{\mu}{2}-1}\sec^{2}\left(A(t)\right),
    \end{equation}
    % \begin{equation}\label{eq: eta dot}
    %     \resizebox{.875\hsize}{!}{$\dot{\eta}(t) = -c_{1}\sqrt{\frac{ \lambda_{max}(\boldsymbol{\Gamma})}{2}}\left(\sqrt{\frac{c_1}{c_2}}\tan\left(A(t)\right)\right)^{\frac{\mu}{2}-1}\sec^{2}\left(A(t)\right)$},
    % \end{equation}
    where $\eta(t)$ is given by \eqref{eq.err_bound}, and
    % \begin{align}
    % \\
    % \boldsymbol{\nu}(\hat{\boldsymbol{\theta}}) &= \sum_{i=1}^p  \min\left\{C_iP_{\Theta}(\hat{\theta}_i - \eta),C_iP_{\Theta}(\hat{\theta}_i + \eta)\right\}\label{eq: safety projection} 
    % % \\
    % % Z(t) &= \Xi -\frac{\sqrt{c_1c_2}}{\mu}t
    % \end{align}
    % where 
    $A(t)$ is given by \eqref{eq.A(t)} and $C_i$ denotes the $i^{th}$ column of $L_{\Delta}h(\boldsymbol{x})$ for $i \in \{1,\hdots,p\}$.
    % , with $P_{\Theta}: \mathbb R^p \mapsto \Theta$ the vector projection operator onto $\Theta$, as defined in \cite{Luenberger1969Optimization}.
\end{Theorem}
\vspace{-5mm}
\begin{proof}
    It is sufficient for forward invariance of $S$ to show that \eqref{RaCBF Condition} implies \eqref{eq: cbf condition}.
    % Recall that $S$ is rendered forward-invariant if \eqref{eq: cbf condition} holds for all $\boldsymbol{x} \in S$. 
    First, replace $\boldsymbol{\vartheta}$ from \eqref{eq: cbf condition} with $\boldsymbol{\eta}(t)$. It follows from $\boldsymbol{\Gamma}$ being diagonal that
    \begin{align}
        \frac{\partial h}{\partial \boldsymbol{x}}\dot{\boldsymbol{x}} &= L_fh(\boldsymbol{x}) + L_gh(\boldsymbol{x})\boldsymbol{u} + L_\Delta h(\boldsymbol{x})\boldsymbol{\theta}^*, \label{eq.hr_dot_x}\\
        \frac{\partial m}{\partial \boldsymbol{\eta}}\dot{\boldsymbol{\eta}} &= -\boldsymbol{\eta}^T\Gamma^{-1}\dot{\boldsymbol{\eta}} = -\mathrm{Tr}(\Gamma^{-1})\eta\dot{\eta},
    \end{align}
    where $\dot{\eta}$ given by \eqref{eq: eta dot} is obtained by differentiating \eqref{eq.err_bound}. Note that since $A(0) < \frac{\pi}{2}$ and $A(t)$ decreases monotonically to zero, $\sec^2(A(t))$ and thus $\dot\eta$ are well-defined for all $t \geq 0$.
    % Since \eqref{eq.hr_dot_x} includes the unknown term, $L_\Delta h(\boldsymbol{x})\boldsymbol{\theta}^*$, it must be shown that $\boldsymbol{\nu}$ given by \eqref{eq: safety projection} compensates for this unknown.
    % , and thus that \eqref{RaCBF Condition} implies \eqref{eq: cbf condition}.
    
    Then, observe that $L_\Delta h(\boldsymbol{x})\boldsymbol{\theta}^* = L_\Delta h(\boldsymbol{x})\Tilde{\boldsymbol{\theta}} + L_\Delta h(\boldsymbol{x})\hat{\boldsymbol{\theta}} = L_\Delta h(\boldsymbol{x})\Tilde{\boldsymbol{\theta}}_R + L_\Delta h(\boldsymbol{x})\hat{\boldsymbol{\theta}}$, where $\Tilde{\boldsymbol{\theta}}_R$ is as in \eqref{eq.err_tilde_R}. Thus, \eqref{RaCBF Condition} implies forward invariance of $S$ if $L_\Delta h(\boldsymbol{x})\boldsymbol{\theta}^* \geq L_\Delta h(\boldsymbol{x})\Tilde{\boldsymbol{\theta}}_R + L_\Delta h(\boldsymbol{x})\hat{\boldsymbol{\theta}}$. Next, since \eqref{eq.err_bound} defines an upper bound on the error at time $t$, $\|\Tilde{\boldsymbol{\theta}}_R(t)\|_{\infty} = \eta(t)$, let the set of admissible unknown parameters at time $t$ be denoted as $\Phi(t)$, where $\Phi(t) = \Theta \cap \{\boldsymbol{\theta} \in \mathbb R^p: \; \|\boldsymbol{\theta}_R - \hat{\boldsymbol{\theta}}_R(t)\|_{\infty} \leq \eta(t)\}$,
    % \begin{equation}
    %     \Phi(t) = \Theta \cap \{\boldsymbol{\theta} \in \mathbb R^p: \; \|\boldsymbol{\theta}_R - \hat{\boldsymbol{\theta}}_R(t)\|_{\infty} \leq \eta(t)\},
    % \end{equation}
    with $\boldsymbol{\theta} = \boldsymbol{\theta}_R + \boldsymbol{\theta}_N$, and $\boldsymbol{\theta}_R \in \mathcal{R}(\boldsymbol{M})$, $\boldsymbol{\theta}_N \in \mathcal{N}(\boldsymbol{M})$. By definition, $\Phi(t)$ is compact and convex. As such, consider the following minimization problem:
    \begin{equation*}
        J_{\phi}=\min_{\boldsymbol{\phi} \in \Phi(t)} L_{\Delta}h(x)\boldsymbol{\phi},
    \end{equation*}
    which may be solved by solving $p$ linear programs (LPs):
% . It is well known that 
%     % for this class of problems 
%     there exists a {\color{red}(pointwise-in-time)} minimum, $J_{\phi} = L_{\Delta}h(\boldsymbol{x})\boldsymbol{\phi}^*$. Since $L_\Delta h(x)\boldsymbol\phi = \sum_{i = 1}^pC_i\phi_i$ with $C_i$ as the i$^{th}$ column of $L_{\Delta}h(x)$, the above may be solved by solving 
%     % $\bar{\phi}_1,\;\hdots\;,\bar{\phi}_p$ are 
%     following $p$ constrained linear programs:
    \begin{equation*}
        \min_{\underaccent{\bar}{\zeta}_i \leq \phi_i \leq \bar{\zeta}_i} C_i\phi_i, \quad \forall i \in \{1,\hdots,p\},
    \end{equation*}
    for each of which, being constrained LPs over compact domains, there exists a (pointwise-in-time) minimum,
    where $\underaccent{\bar}{\zeta}_i = \max\{\theta_{min},\hat{\theta}_i - \eta\}$ and $\bar{\zeta}_i=\min\{\theta_{max},\hat{\theta}_i + \eta\}$ so that $\phi_i^* = \argmin_{\zeta_i \in \{\underaccent{\bar}{\zeta}_i,\bar\zeta_i\}}C_i\zeta_i$.
    Thus, $\forall C_i \neq 0$ it follows that $\phi_i^* = \underaccent{\bar}{\zeta}_i$ or $\phi_i^* = \bar{\zeta}_i$, and $\phi_i^*$ can take any value for $C_i=0$. As such, $\boldsymbol{\nu} = \sum_{i=1}^pC_i\phi_i^* \leq L_{\Delta}h(\boldsymbol{x})\boldsymbol{\theta}^*$, and so \eqref{RaCBF Condition} implies \eqref{eq: cbf condition}. 
    % This completes the proof.
    % and so as long as there exists a control input, $\boldsymbol{u}$, which satisfies \eqref{eq: cbf condition}, the set $S$ defined by \eqref{eq.safe_set} is forward-invariant.
\end{proof}
The satisfaction of \eqref{RaCBF Condition} via control $\boldsymbol{u}$ guarantees forward-invariance of the set $S$ by being robust to parameter estimation error and adaptive to how that error changes in time. Further, \eqref{RaCBF Condition} is affine in $\boldsymbol{u}$, which allows it to be synthesized as a constraint in a QP-based control law.

% In the next section, the proposed method is demonstrated on two problems: a goal-reaching example which requires a vehicle to maneuver safely through a narrow gap, and a quadrotor trajectory-tracking problem.

%%%%%%%%%%%%%%%%%%%%%%%%%%%%%%%%%%%%%%%%%%%%%%%%%%%%%%%%%%%%%%%%%%%%%%%%%%%%%%%%
%**************************** Numerical Case Study ****************************%
%%%%%%%%%%%%%%%%%%%%%%%%%%%%%%%%%%%%%%%%%%%%%%%%%%%%%%%%%%%%%%%%%%%%%%%%%%%%%%%%
 % in the body of the document
% \section{Numerical Case Studies\protect\footnotemark}\label{sec: simulation}
\section{Numerical Case Studies}\label{sec: simulation}
% \footnotetext{All simulation parameters, code, and data are available online at \url{https://github.com/6lackmitchell/fxt_adaptation}}

%********************************
%********** Comparing Controllers
\subsection{``Shoot the Gap": A Comparative Study}

The first study compares the proposed adaptive control scheme with the following works on the ``Shoot the Gap" scenario, where a point mass must pass through a gap between unsafe regions to reach its goal: \cite{Black2021FixedTime} (BLA), \cite{black2020quadratic} (BLR), \cite{Lopez2020racbf} w/o SMID (LOP), \cite{Lopez2020racbf} w/ SMID (LSM), \cite{Taylor2019aCBF} (TAY), and \cite{Zhao2020robustQP} (ZHA). The system model is
\begin{align}\label{eq: simple dynamics}
    \dot{z} = \begin{bmatrix}
            \dot{x}\\
            \dot{y}
        \end{bmatrix} &=
        \begin{bmatrix}
            1 & 0\\
            0 & 1
        \end{bmatrix}
        \begin{bmatrix}
            u_x\\
            u_y
        \end{bmatrix}
        +
        \Delta(z)
        \begin{bmatrix}
            \theta _1\\
            \theta _2
        \end{bmatrix},
\end{align}\normalsize
where $z=[x \; y]^T$ is the state comprised of the lateral ($x$) and longitudinal ($y$) position coordinates with respect to an inertial frame, $\theta_1$, $\theta_2$ are constant parameters that are unknown but bounded, and $\Delta$ is the regressor. Controller performance is first investigated when the regressor is full-rank, i.e., $\Delta=\Delta_F$, where $\Delta_F(z) =  K_{\Delta}\boldsymbol{I}[1 + \sin^2(2\pi f_1 x), 1 + \cos^2(2\pi f_2 y)]^\top$,
% \begin{equation}
%     \Delta_F(z) =  \nonumber K_{\Delta}\begin{bmatrix}
%             1 + \sin^2(2\pi f_1 x) & 0 \\
%             0 & 1 + \cos^2(2\pi f_2 y)
%         \end{bmatrix},
% \end{equation}
with $K_{\Delta},f_1,f_2>0$, and is then examined under a rank-deficient regressor, $\Delta=\Delta_D$, where $\Delta_D(z) = [-x/2, -x; -x/4, -x/2]$.
\begin{figure*}[!ht]
\hspace*{-0.20em}
\begin{subfigure}{0.34\textwidth}
    \includegraphics[width=1\textwidth,left]{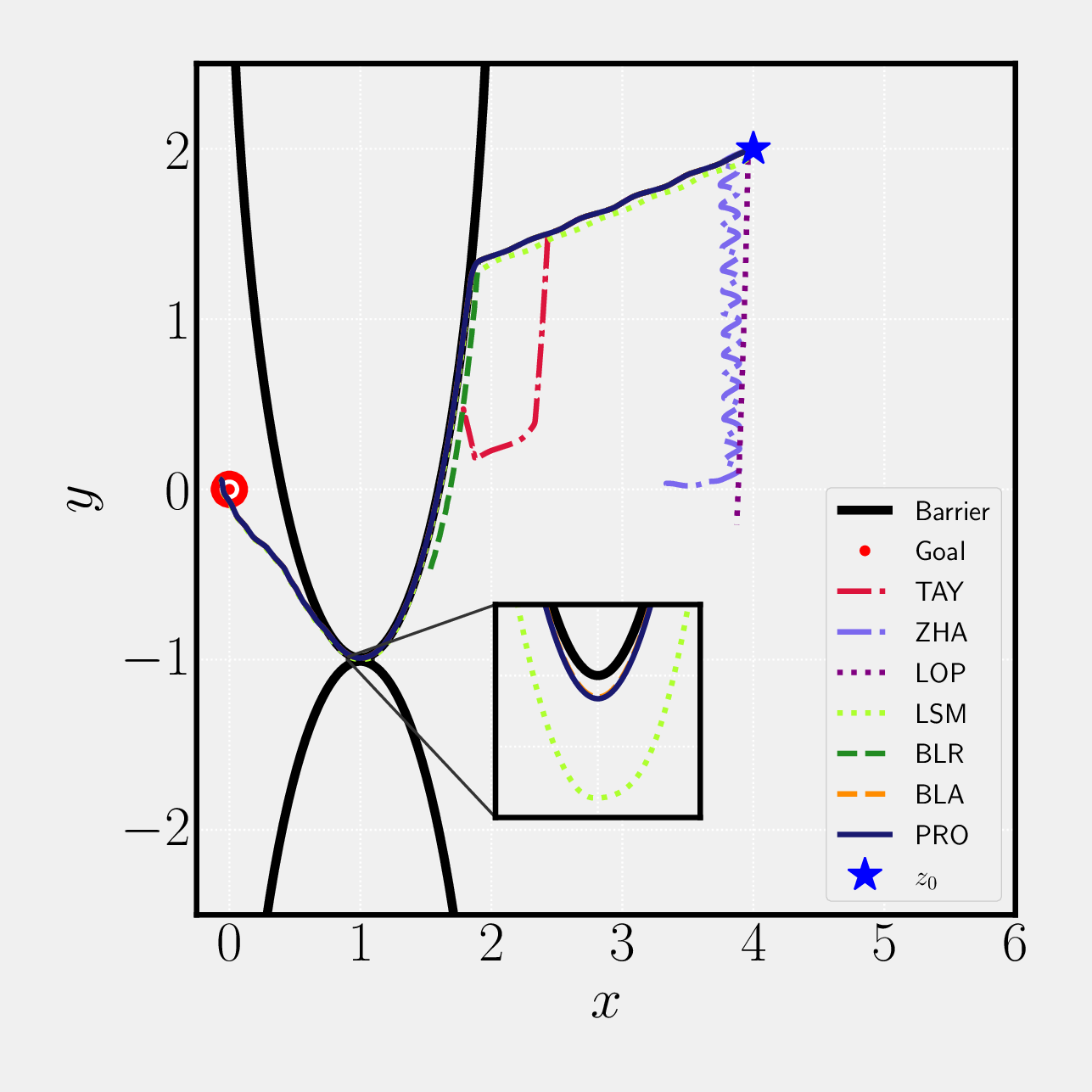}
    \caption{States}\label{fig.simple_trajectories_fr}
\end{subfigure}%
\hspace*{-0.5em}
\begin{subfigure}{0.34\textwidth}
    \includegraphics[width=1\textwidth]{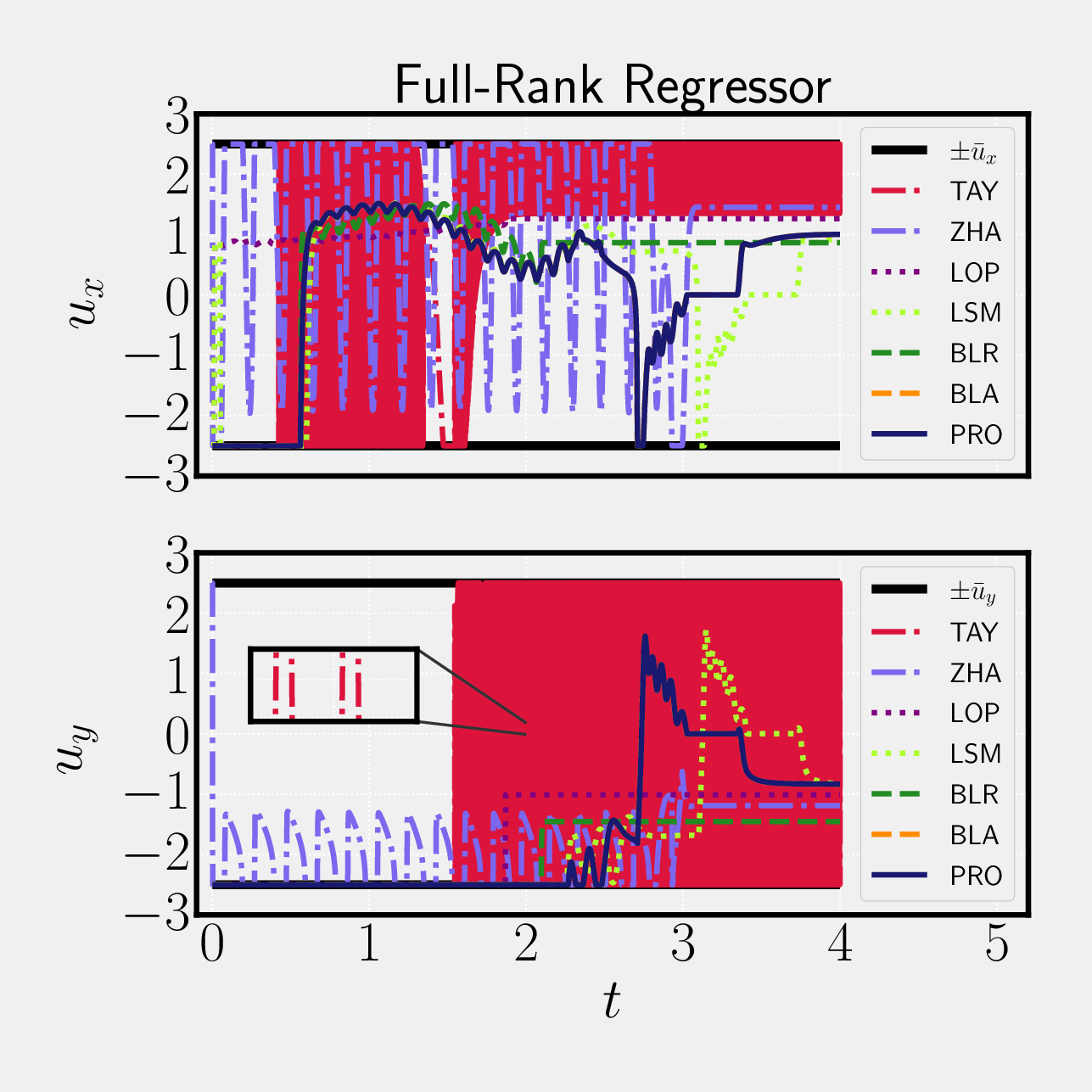}
    \caption{Control inputs}\label{fig.simple_controls_fr}
\end{subfigure}%
\hspace*{-1.25em}
\begin{subfigure}{0.34\textwidth}
    \includegraphics[width=1\textwidth,right]{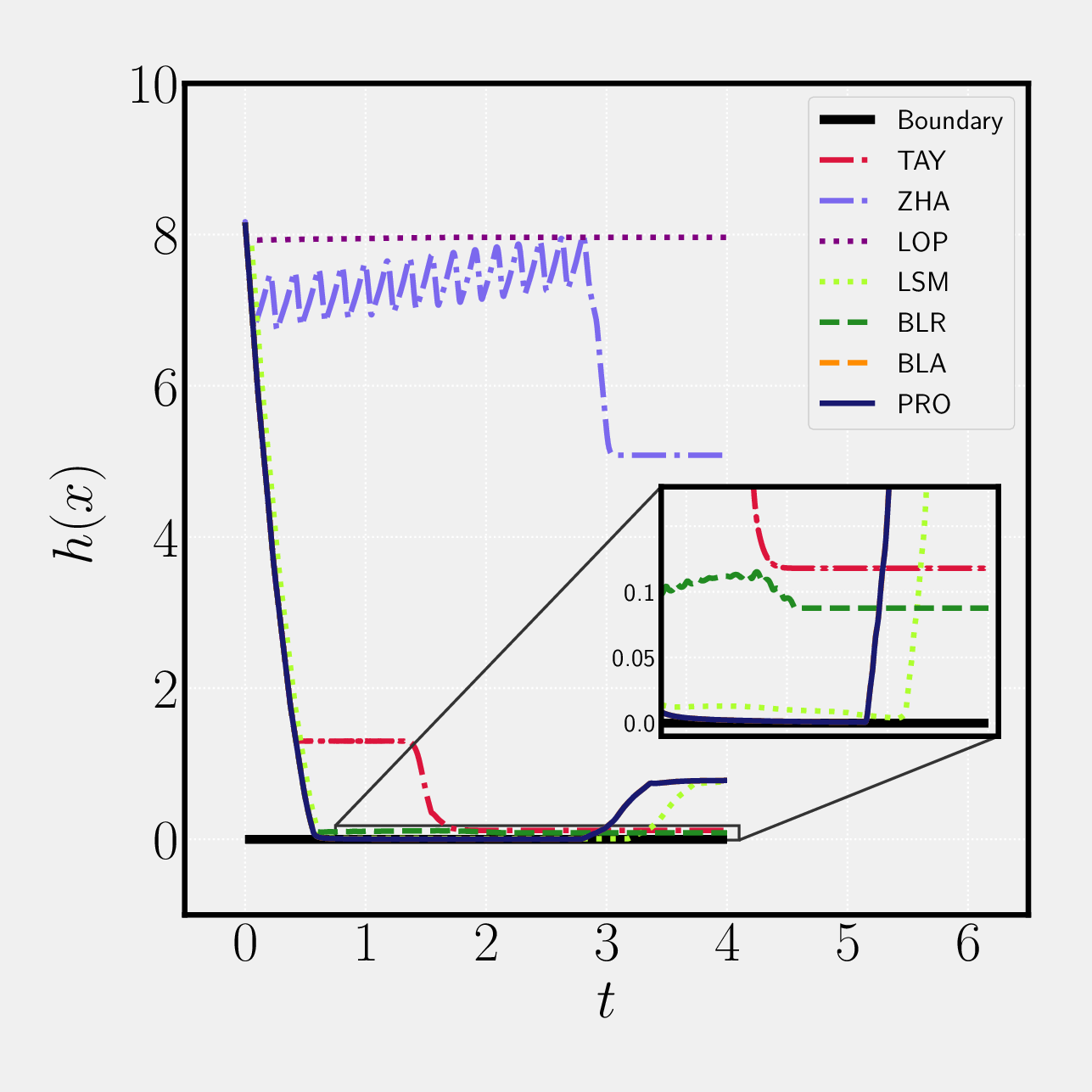}
    \caption{Minimum CBF value}\label{fig.simple_cbfs_fr}
\end{subfigure}%
\caption{\small{Evolutions of the states, control inputs, and control barrier functions for the full-rank regressor ``Shoot the Gap" example.}}
\label{fig.simple_fr}
\end{figure*}
\begin{figure*}[!ht]
\hspace*{-0.20em}
\begin{subfigure}{0.34\textwidth}
    \includegraphics[width=1\textwidth,left]{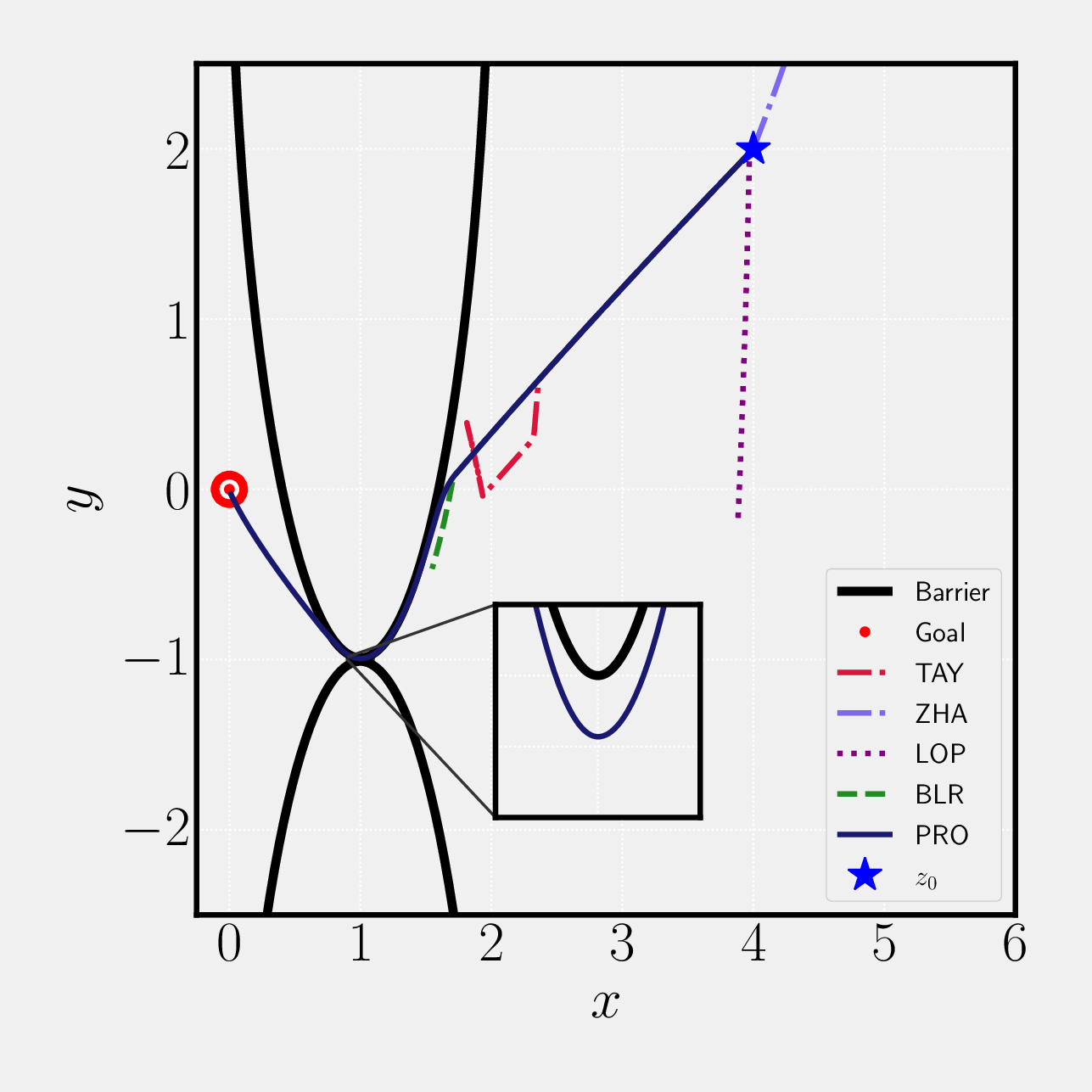}
    \caption{States}\label{fig.simple_trajectories_rd}
\end{subfigure}%
\hspace*{-0.5em}
\begin{subfigure}{0.34\textwidth}
    \includegraphics[width=1\textwidth]{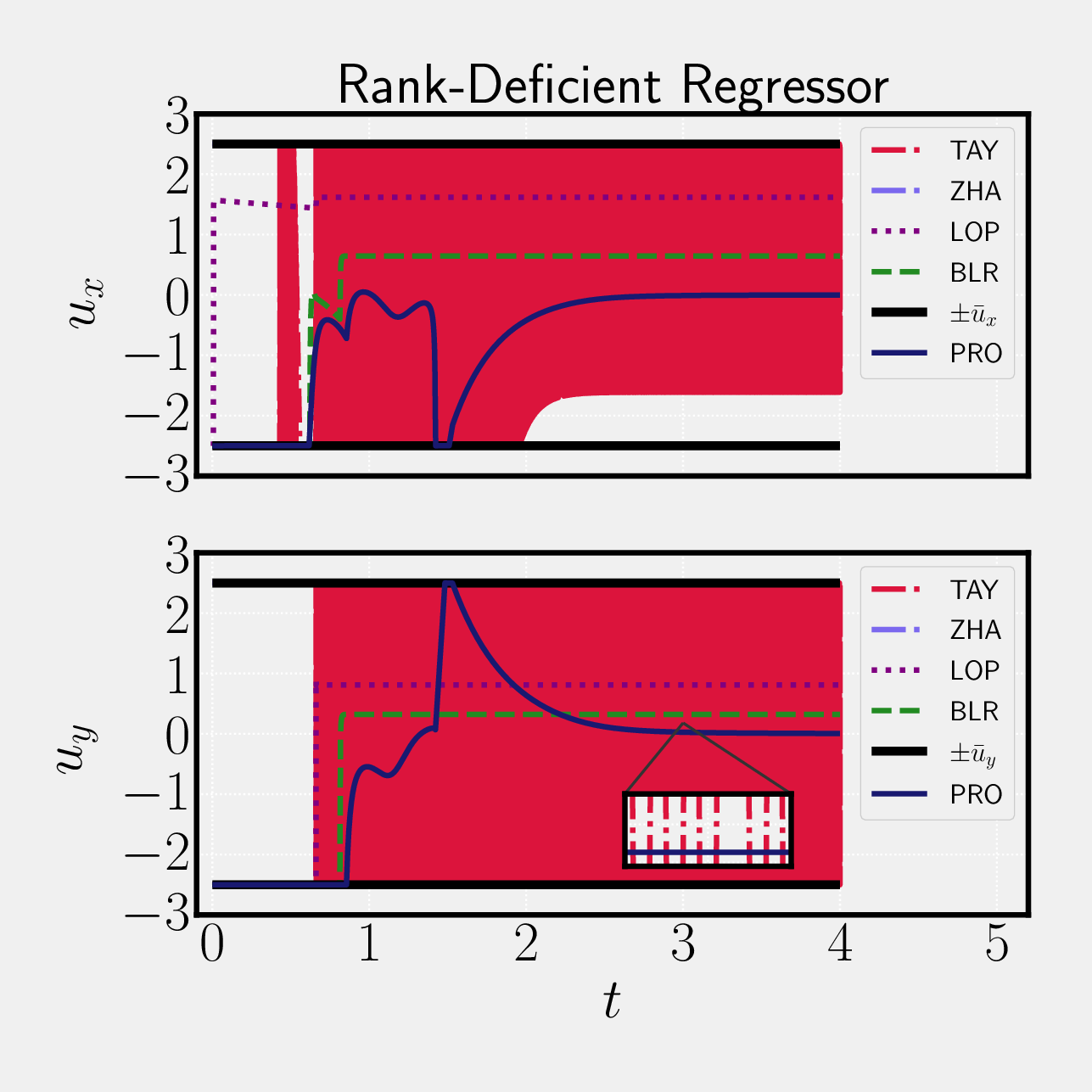}
    \caption{Control inputs}\label{fig.simple_controls_rd}
\end{subfigure}%
\hspace*{-1.25em}
\begin{subfigure}{0.34\textwidth}
    \includegraphics[width=1\textwidth,right]{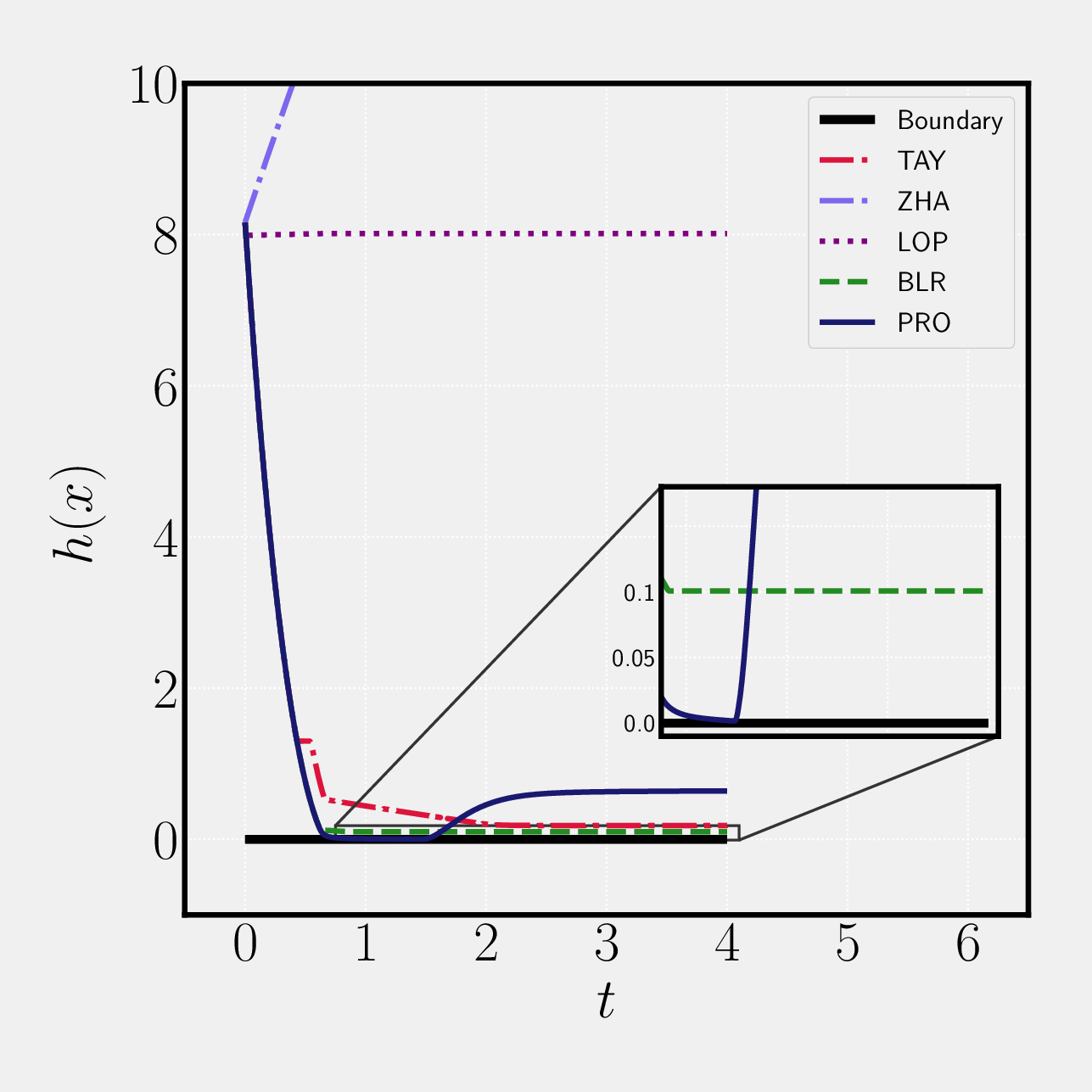}
    \caption{Minimum CBF value}\label{fig.simple_cbfs_rd}
\end{subfigure}%
\caption{\small{Evolutions of the states, control inputs, and control barrier functions for the rank-deficient regressor ``Shoot the Gap" example.}}
\label{fig.simple_rd}
\end{figure*}
For goal-convergence, a control Lyapunov function (CLF) candidate $V(z) = x^2 + y^2$ is used. The safe states are those residing outside of two ellipses shown in Figure \ref{fig.simple_trajectories_fr}, which leads to constraint functions $h_i(z) = \frac{(x - x_i)^2}{a^2} + \frac{(y - y_i)^2}{b^2} - 1$,
% $h_i(z) = \frac{(x - x_i)^2}{a^2} + \frac{(y - y_i)^2}{b^2} - 1$,
% \begin{align}\label{cbfs simple}
%     h_i(z) = \frac{(x - x_i)^2}{a^2} + \frac{(y - y_i)^2}{b^2} - 1,
%     % h_2(z) = \frac{(x - x_2)^2}{a^2} + \frac{(y - y_2)^2}{b^2} - 1
% \end{align}
% \noindent
for $i \in \{1,2\}$, where $x_1$, $x_2$, $y_1$, $y_2$, $a$, and $b$ define the location, size, and shape of the ellipses.
For control design, the CLF-CBF-QP (\hspace{-0.3pt}\cite{ames2017control,black2020quadratic}) is selected:
\begin{subequations}\label{CLF-CBF QP}
\begin{align}
    \min_{\boldsymbol{u},\delta _0,\delta _1,...,\delta_q} \frac{1}{2}\boldsymbol{u}^{T}Q\boldsymbol{u} &+q_0\delta _0^2 + \sum_{i=1}^q p_i\delta _i^2,\\ %+p_2\delta _2^2\label{pt_J}\\
    \nonumber \textrm{s.t.} \; \forall i \in \{1,&\hdots,q\}, \; \forall j \in \{x,y\} \\
    -\bar{u}_j \leq &u_j \leq \bar{u}_j, \label{u constraint} \\
    1 &\leq \delta_i, \label{d1 constraint} \\
    L_fV+L_gV\boldsymbol{u}&+\phi_V \leq  \delta _0 - c_1V^{\gamma _1} - c_2V^{\gamma _2}, \label{pt_V}\\
    L_fh_i+L_gh_i\boldsymbol{u}&+\phi_h \geq -\delta _ih_i, \label{pt_S1}
\end{align}
\end{subequations}
where $\delta_0$ is a relaxation parameter (penalized by $q_0>0$) that aids the QP feasibility, $\delta_i$ allows for larger negative values of $\dot{h}_i$ away from the boundary of the safe set, and $p_i>0$ penalizes $\delta_i$. The terms $\phi_V = \phi_V(\boldsymbol{x},\Delta(\boldsymbol{x}),\hat{\boldsymbol{\theta}},\eta)$ and $\phi_h = \phi_h(\boldsymbol{x},\Delta(\boldsymbol{x}),\hat{\boldsymbol{\theta}},\eta)$ are left intentionally vague and denote placeholders for how each work from the literature treats the model uncertainty. To summarize, \eqref{u constraint} enforces input constraints, \eqref{d1 constraint} constrains the relaxation coefficients $\delta_i$, \eqref{pt_V} encodes FxT convergence to the goal, and \eqref{pt_S1} guarantees safety.
To fairly compare the proposed approach with the literature, the FxT-CLF condition was tested in all controllers in addition to their native approaches to stabilization. No meaningful differences were found between these cases, and thus results are presented for the FxT-CLF cases.

% \subsubsection{Results}
The simulated state, control, and CBF trajectories for the case of $\Delta=\Delta_F$ are shown in Figure \ref{fig.simple_fr}. The only controllers that safely drive the vehicle to the goal are those that closely estimate the unknown parameters in the system dynamics, i.e., the BLA, LSM, and PRO approaches. It is apparent from Figure \ref{fig.simple_theta_fr} that the parameter estimates converge to their true values within fixed-time $T$ given by \eqref{eq: FxT Time Bound}. 
% Figure \ref{fig.simple_controls_fr} further highlights that the proposed technique avoids the undesirable chattering control behavior exhibited by the TAY approach \cite{Taylor2019aCBF}.
When $\Delta = \Delta_D$, however, both the BLA and LSM methods that succeed under a full-rank regressor fail; each requires the inversion of a non-invertible matrix. As illustrated by Figure \ref{fig.simple_rd}, the proposed controller succeeds in safely reaching the goal by learning a parameter vector, $\hat{\boldsymbol{\theta}}$, within a fixed time that satisfies $\Delta_D(\boldsymbol{x})\hat{\boldsymbol{\theta}}=\Delta_D(\boldsymbol{x})\boldsymbol{\theta}^*$, as supported by Figure \ref{fig.simple_theta_rd}. Under these dynamics, the proposed approach is the only controller that successfully reaches the goal. In this sense, it is clear that the proposed adaptation law adds value to control for the class of systems \eqref{uncertain system}.

% In the case of the full-rank regressor, it is evident that the ability of the adaptation law \eqref{eq: FxT Estimation Law} to learn the system uncertainty in fixed-time allowed the proposed method to drive the state to the goal when others did not. Doing so despite a rank-deficient regressor further demonstrates that the proposed adaptation law adds value to controllers for the class of systems under consideration.

\begin{figure}[!h]
    \centering
        \includegraphics[width=0.85\columnwidth,clip]{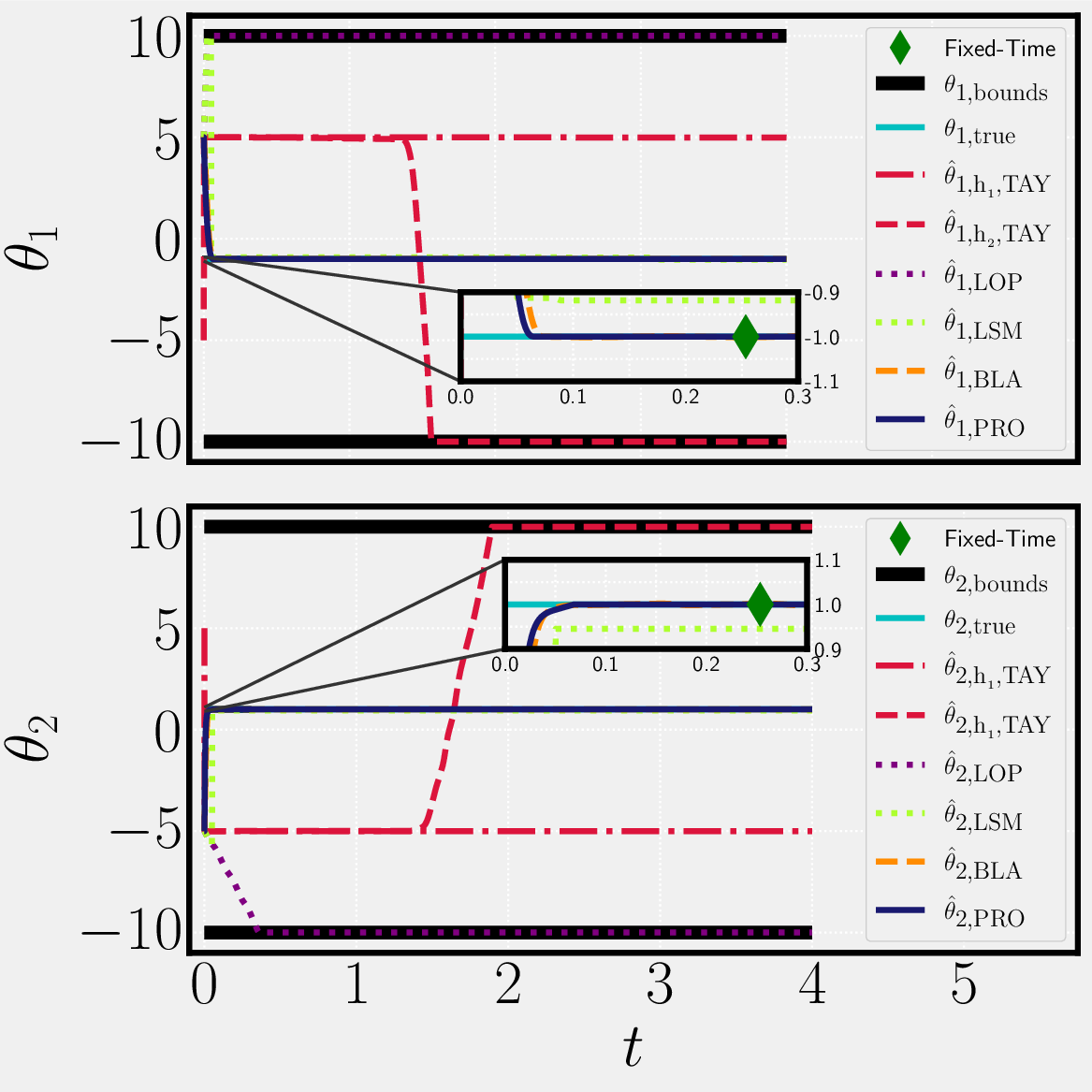}
    \caption{\small{Parameter estimate trajectories for the full-rank regressor ``Shoot the Gap" example.}}\label{fig.simple_theta_fr}
    % \vspace{-2mm}
\end{figure}

\begin{figure}[!h]
    \centering
        \includegraphics[width=0.85\columnwidth,clip]{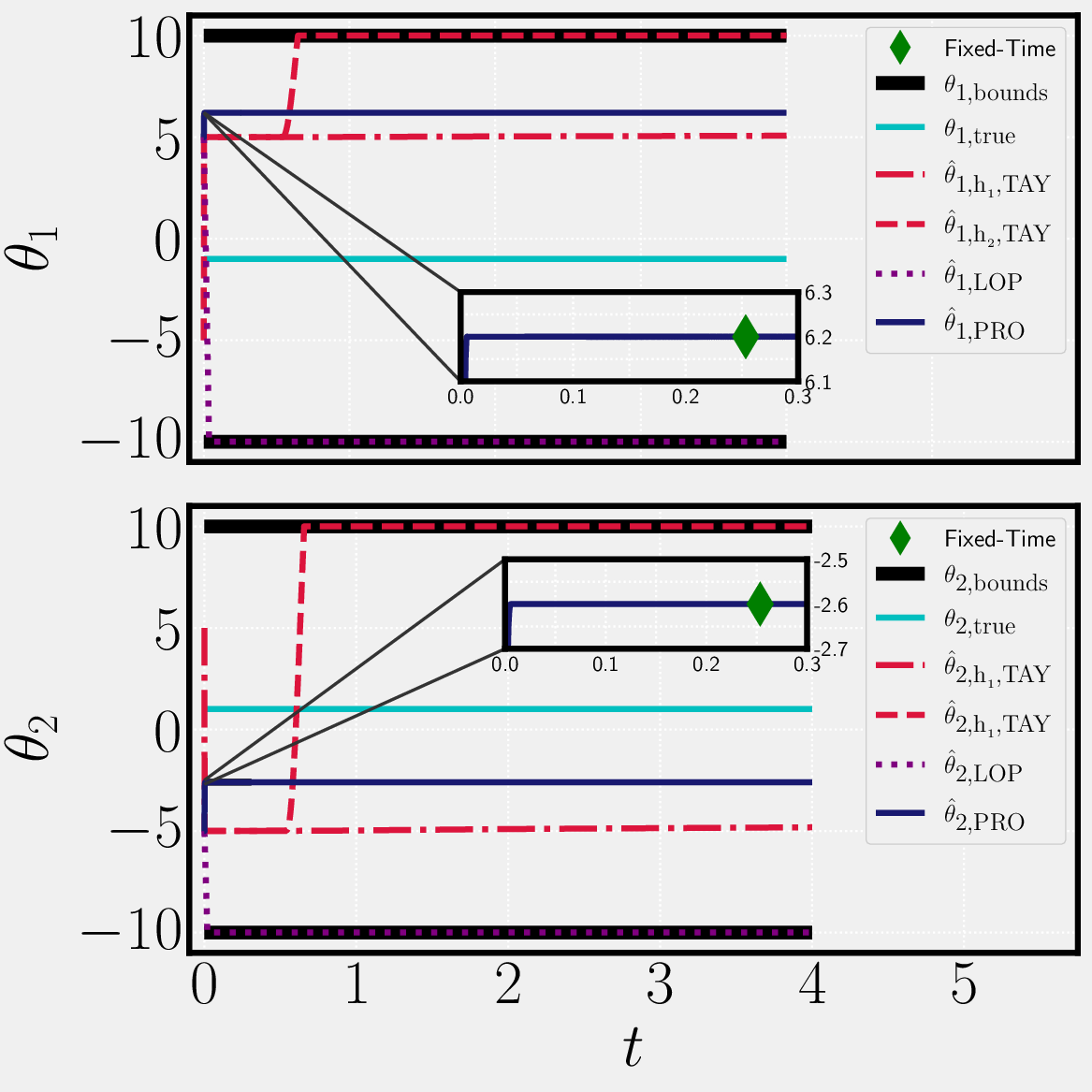}
    \caption{\small{Parameter estimate trajectories for the rank-deficient regressor ``Shoot the Gap" example.}}\label{fig.simple_theta_rd}
    % \vspace{-2mm}
\end{figure}

%********************************
%********** Comparing Controllers
\subsection{Quadrotor with unknown drag coefficients}
The next study examines a simulated quadrotor in a wind field: the objective is to track a Gerono lemniscate (figure-eight) trajectory while obeying attitude and altitude constraints despite wind perturbations. The nominal quadrotor is modeled via the 6-DOF rigid-body dynamic model in \cite[Ch. 3, Eq. (16) - (19)]{Beard2008quadrotordynamics}. The state is $\boldsymbol{\chi} = [x\; y\; z\; u\; v\; w\; \phi\; \theta\; \psi\; p\; q\; r]^T$; where $x$, $y$, and $z$ are the position coordinates (in m) w.r.t. an inertial frame $\mathcal{I}$; $u$, $v$, and $w$ are the translational velocities (in m/s) w.r.t. the body-fixed frame $\mathcal{B}$; $\phi$, $\theta$, and $\psi$ (in rad) are the roll, pitch, and yaw Euler angles defining a ZYX rotation from $\mathcal{I}$ to $\mathcal{B}$; and $p$, $q$, and $r$ are the roll, pitch, and yaw rates (in rad/s) defined w.r.t. $\mathcal{B}$. While wind perturbs the dynamics via such phenomena as blade flapping, induced velocity, and aerodynamic drag (see \cite{Sydney2013quadrotor,tran2015quadrotor}), in this paper only aerodynamic drag is considered.

% To model the wind, we note that much attention has been given to its effect on the nominal dynamics of a quadrotor system \cite{Sydney2013quadrotor,tran2015quadrotor,Guo2018quadrotor}. Specifically, the phenomena of blade flapping, induced velocity, and aerodynamic drag have been studied extensively. We neglect the effects of blade flapping and induced velocity, and instead focus on aerodynamic drag. 

\subsubsection{Constant Wind Field}
The first scenario is the quadrotor perturbed by a constant, known wind field. Its drag coefficient vector $\boldsymbol{C}_d = [C_x \; C_y \; C_z]^T$ is unknown, where $C_x$, $C_y$, and $C_z$ are the components in $\mathcal{B}$.
The control inputs are thrust of the rotors $F$ (in N) and rotor-induced rolling, pitching, and yawing torques $\tau_\phi$, $\tau_\theta$, and $\tau_\psi$ (in N$\cdot$m) respectively. Since the model is affine in $\boldsymbol{u} = [F \; \tau_{\phi} \; \tau_{\theta} \; \tau_{\psi}]^T$, it can be written as 
% $\dot{\boldsymbol{\chi}} = f(\boldsymbol{\chi}) + g(\boldsymbol{\chi})\boldsymbol{u}$. The perturbed model under consideration in this example is then 
$\dot{\boldsymbol{\chi}} = f(\boldsymbol{\chi}) + g(\boldsymbol{\chi})\boldsymbol{u} + \Delta(\boldsymbol{\chi})\boldsymbol{C}_d$, where $\Delta(\boldsymbol{\chi}) = [\boldsymbol{0}_{3 \times 3} \; \Delta_a(\boldsymbol{\chi}) \; \boldsymbol{0}_{6 \times 3}]^T$, with $\Delta_a(\boldsymbol{\chi}) = K_\Delta \frac{\|\boldsymbol{v}_r\|}{M}\boldsymbol{I}[v_{r, 1} \; v_{r, 2} \; v_{r,3}]^\top$,
% \begin{align}
%     \Delta_a(\boldsymbol{\chi}) =  \nonumber K_\Delta\frac{\|\boldsymbol{v}_r\|}{M}\begin{bmatrix}
%         v_{r,1} & 0 & 0\\
%         0 & v_{r,2} & 0\\
%         0 & 0 & v_{r,3}
%     \end{bmatrix}\nonumber
% \end{align}
where $K_\Delta > 0$ and $\boldsymbol{v}_r = \boldsymbol{R}\boldsymbol{v}_w - \boldsymbol{v}_q$ is the relative wind-velocity vector in $\mathcal{B}$, with $\boldsymbol{v}_q = [u \; v \; w]^T$ the quadrotor velocity vector w.r.t. $\mathcal{B}$, $\boldsymbol{v}_w$ the wind velocity vector w.r.t. $\mathcal{I}$, and $\boldsymbol{R}$ the rotation matrix from $\mathcal{I}$ to $\mathcal{B}$. Additionally, $v_{r,1}$, $v_{r,2}$, and $v_{r,3}$ are the principal components of $\boldsymbol{v}_r$
% and $\boldsymbol{C}_d = [C_x \; C_y \; C_z]^T$, with $C_x$, $C_y$, and $C_z$ as the unknown drag coefficients in the principal body-fixed directions, 
such that $\Delta(\boldsymbol{\chi})\boldsymbol{C}_d$ models the effect of aerodynamic drag acting on the center of mass of the quadrotor. 

The following CBF-QP control law (e.g., \cite{black2020quadratic}) is used:
% Now, let $\boldsymbol{v}=[\boldsymbol{u}\; \delta_1\; \delta_2]^T \in \mathbb R^{6}$ and use the following CBF-QP-based controller \cite{cheng2019end,black2020quadratic}:
\begin{subequations}\label{eq: CBF-QP Compensating Controller}
\begin{align}
    \boldsymbol{v}^* = \argmin_{\boldsymbol{v}} \frac{1}{2}\|\boldsymbol{u} &- \boldsymbol{u}_{nom}\|^2 + p_1\delta _1^2 + p_2\delta _2^2 \label{subeq: CBF-QP J}\\
    \nonumber &\textrm{s.t.} \\
    A\boldsymbol{v} &\leq \boldsymbol{b} \label{subeq: control constraint} \\
    L_fh_1(\boldsymbol{\chi})+L_gh_1(\boldsymbol{\chi})\boldsymbol{u} &\geq -\delta _1h_1(\boldsymbol{\chi}) - \boldsymbol{r}(t,\hat{\boldsymbol{C}}_d) \label{subeq: h1 constraint} \\
    L_fh_2(\boldsymbol{\chi})+L_gh_2(\boldsymbol{\chi})\boldsymbol{u} &\geq -\delta _2h_2(\boldsymbol{\chi}) - \boldsymbol{r}(t,\hat{\boldsymbol{C}}_d) \label{subeq: h2 constraint}
\end{align}
\end{subequations}
where $\boldsymbol{v}=[\boldsymbol{u}\; \delta_1\; \delta_2]^T \in \mathbb R^{6}$, and $p_1,p_2>0$. \eqref{subeq: CBF-QP J} seeks to minimize the deviation of $\boldsymbol{u}^*$ from some nominal control $\boldsymbol{u}_{nom}$, and to minimize the slack variables $\delta_1^*,\delta_2^*$ penalized by $p_1,p_2>0$; \eqref{subeq: control constraint} encodes control input constraints and enforces that $\delta_1^*,\delta_2^*\geq \underaccent{\bar}{\delta} > 0$; and \eqref{subeq: h1 constraint} and \eqref{subeq: h2 constraint} enforce constraint functions $h_1(\boldsymbol{\chi}) = 1 - \left((z - c_z)/p_z\right)^{n_z}$ and $h_2(\boldsymbol{\chi}) = \cos(\phi)\cos(\theta) - \cos(\alpha)$,
% \begin{align}
%     h_1(\boldsymbol{\chi}) &= 1 - \left(\frac{z - c_z}{p_z}\right)^{n_z} \label{eq: cbf quadrotor altitude} \\
%     h_2(\boldsymbol{\chi}) &= \cos(\phi)\cos(\theta) - \cos(\alpha), \label{eq: cbf quadrotor attitude}
% \end{align}
where $c_z=2.5$, $p_z=2.5$, and $n_z=2$ encode that $h_1$ defines the safe altitude range as $[0,5]$, and $h_2$ restricts the thrust vectoring angle to be within $\alpha=\frac{\pi}{2}$ rad of the vertical. 
% In this case, we selected $\alpha=\frac{\pi}{2}$ so that the quadrotor could not flip upside-down. 
For the nominal input $\boldsymbol{u}_{nom}$, the tracking controller developed for quadrotors by \cite{Schoellig2012PeriodicQuadrotor} is used.

% \subsubsection{Results}
The quadrotor was examined under the adaptation law \eqref{eq: FxT Estimation Law}: $\dot{\hat{\boldsymbol{C}}}_d = \boldsymbol{\Gamma}\boldsymbol{W}^T\boldsymbol{e}\left(a\|\boldsymbol{e}\|^{\frac{2}{\mu}} + b\|\boldsymbol{e}\|^{\frac{-2}{\mu}}\right)$,
% \eqref{eq: predictor dynamics}-\eqref{eq: observer eW} (results denoted SP-CW), which is defined here as
% \begin{equation}\label{eq.quadrotor_adaptation}
%     \dot{\hat{\boldsymbol{C}}}_d = \boldsymbol{\Gamma}\boldsymbol{W}^T\boldsymbol{e}\left(a\|\boldsymbol{e}\|^{\frac{2}{\mu}} + \frac{b}{\|\boldsymbol{e}\|^{\frac{2}{\mu}}}\right), \nonumber
% \end{equation}
where $\boldsymbol{e} = \boldsymbol{\chi} - \boldsymbol{z}$, using the predictor scheme mentioned in Remark \ref{rmk.schemes}: $\dot{\boldsymbol{z}} = f(\boldsymbol{\chi}) + g(\boldsymbol{\chi})\boldsymbol{u} + \Delta(\boldsymbol{\chi})\hat{\boldsymbol{C}}_d + k_e\boldsymbol{e} + \boldsymbol{W}\dot{\hat{\boldsymbol{C}}}_d$, $\boldsymbol{z}(0) = \boldsymbol{\chi}(0)$,
% \begin{align}
%     \dot{\boldsymbol{z}} &= f(\boldsymbol{\chi}) + g(\boldsymbol{\chi})\boldsymbol{u} + \Delta(\boldsymbol{\chi})\hat{\boldsymbol{C}}_d + k_e\boldsymbol{e} + \boldsymbol{W}\dot{\hat{\boldsymbol{C}}}_d, \nonumber \\
%     \boldsymbol{z}(0) &= \boldsymbol{\chi}(0), \nonumber
% \end{align}
where $\dot{\boldsymbol{W}} = -k_e\boldsymbol{W} + \Delta(\boldsymbol{\chi})$, $\boldsymbol{W}(0) = \boldsymbol{0}_{n \times p}$,
% \begin{equation}
%     \dot{\boldsymbol{W}} = -k_e\boldsymbol{W} + \Delta(\boldsymbol{\chi}), \quad \boldsymbol{W}(0) = \boldsymbol{0}_{n \times p}, \nonumber
% \end{equation}
and $a,b,\mu = 5$, $k_e=10$. For comparison, the case of zero adaptation ($\dot{\hat{\boldsymbol{C}}}_d=\boldsymbol{0}_{3\times 1}$,  denoted 0-CW) was also studied. The wind-velocity vector $\boldsymbol{v}_w$ was held constant at $\boldsymbol{v}_r = [10, \; -8, \; -5]^T$, which resulted in the regressor being full-rank for all $t \geq 0$. The warm-up time was taken to be $T_w=0.1$ sec. 
% It is possible that a longer warm-up time could result in a decrease in the minimum non-zero singular value of the regressor, thereby reducing the upper bound on convergence time, but this is problem-dependent and can be viewed as a tuning parameter. 
Figure \ref{fig: quad theta hats} indicates that the estimates of the coefficients of drag converged to their true values within the fixed-time bound. Figures \ref{Fig: Quadrotor Trajectories Constant Wind} and \ref{fig.quadrotor_cbfs} highlight that the controller safely tracks the reference trajectory using \eqref{eq: FxT Estimation Law} with the state prediction scheme, whereas poor tracking, albeit safe, is achieved without parameter adaptation. Control trajectories are provided in Figure \ref{fig.quadrotor_controls}.

\begin{figure}[!h]
    \centering
        \includegraphics[width=0.85\columnwidth,clip]{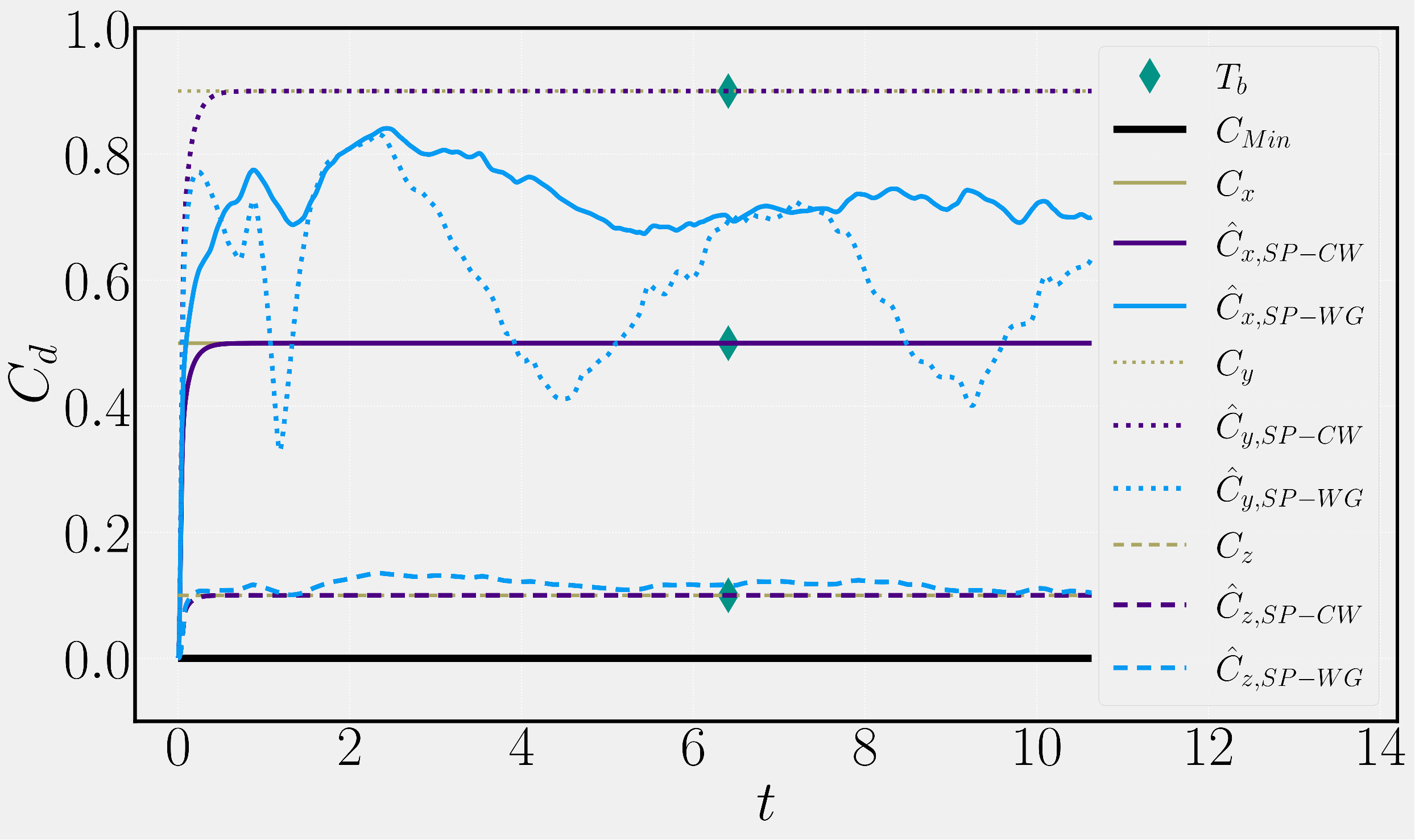}
    \caption{\small{Principal coefficient of drag estimates for the quadrotor in a 1) constant (CW) and 2) gusty (WG) wind field under the proposed controller with the state prediction (SP) scheme.}}\label{fig: quad theta hats}
    % \vspace{-5mm}
\end{figure}

\begin{figure}[!ht]
    \centering
        \includegraphics[width=0.85\columnwidth,clip]{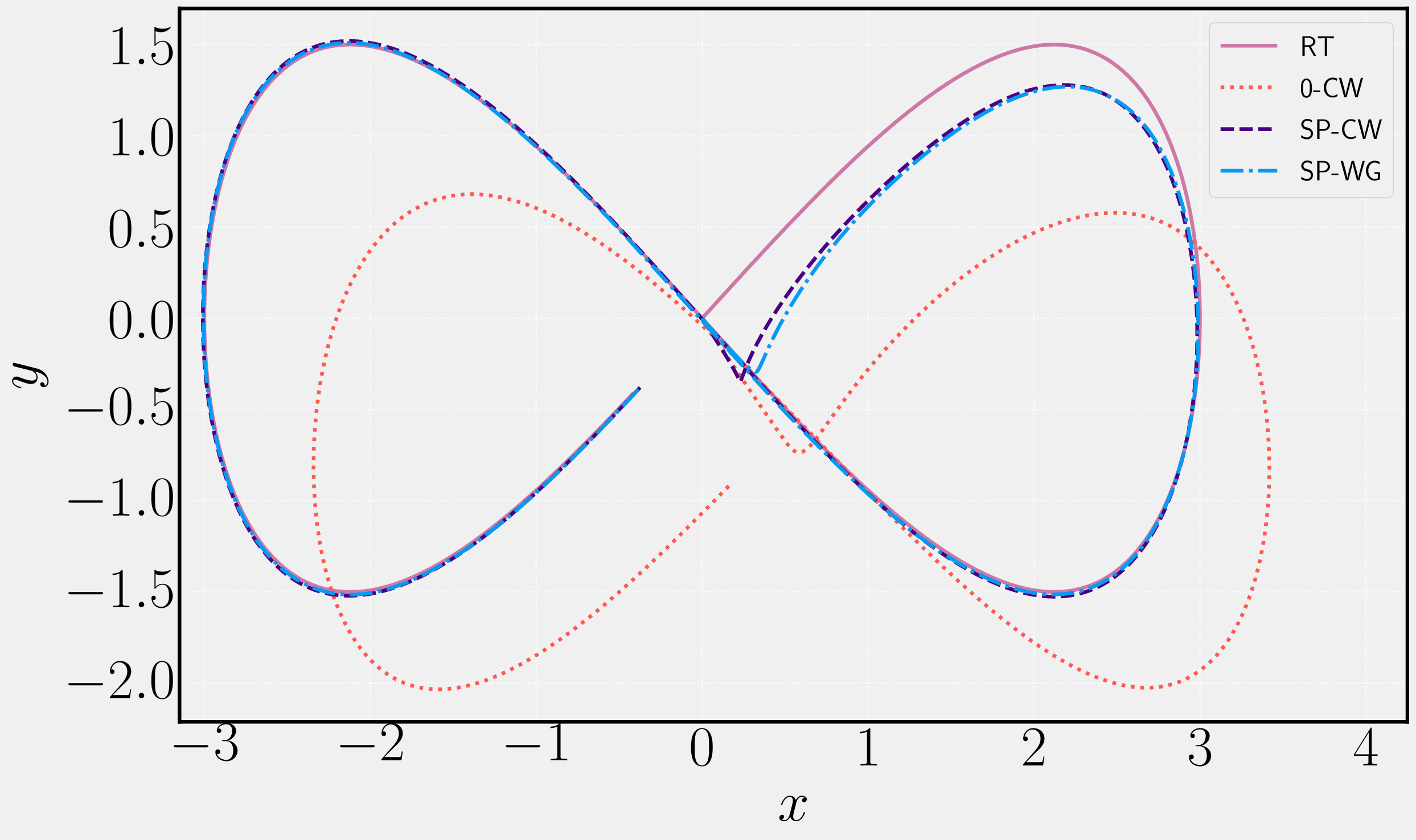}
    \caption{\small{Quadrotor XY trajectories as the controller seeks to track the reference trajectory (RT) in a wind field.}}\label{Fig: Quadrotor Trajectories Constant Wind}
    % \vspace{-5mm}
\end{figure}

\begin{figure}[!ht]
    \centering
        \includegraphics[width=0.85\columnwidth,clip]{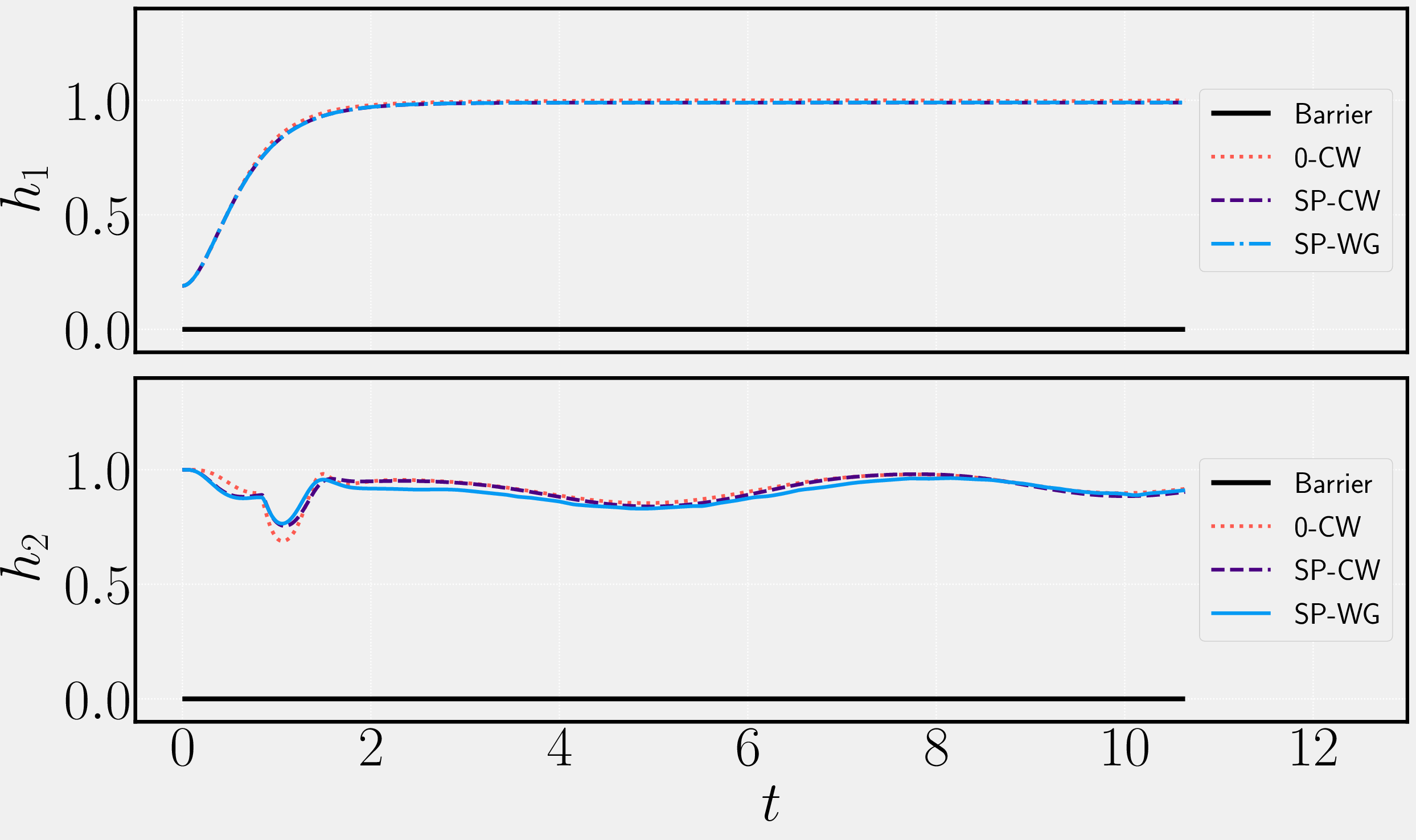}
    \caption{\small{CBF trajectories for the quadrotor numerical study.}}\label{fig.quadrotor_cbfs}
    % \vspace{-5mm}
\end{figure}

\begin{figure}[!ht]
    \centering
        \includegraphics[width=0.85\columnwidth,clip]{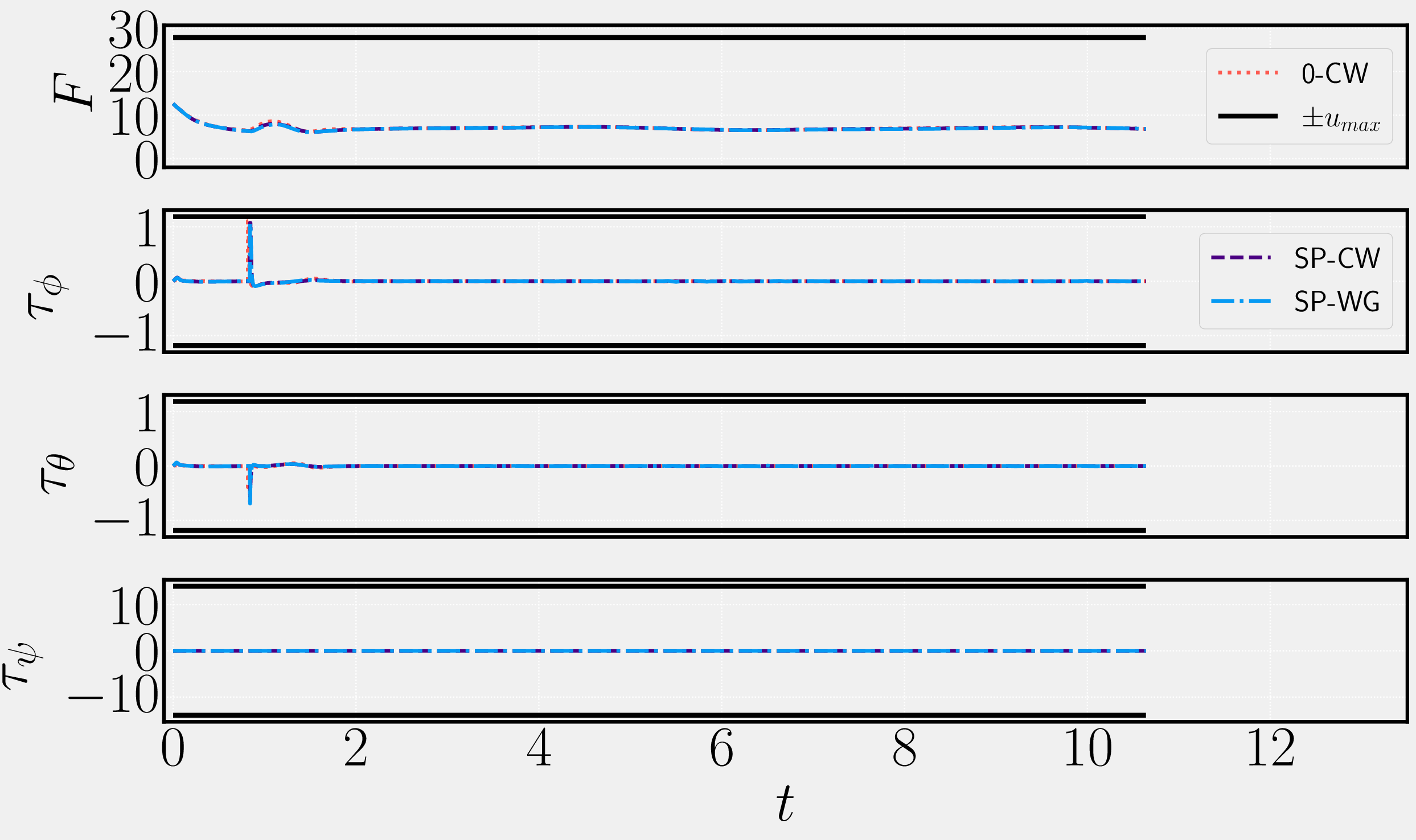}
    \caption{\small{Control inputs for the quadrotor numerical study.}}\label{fig.quadrotor_controls}
    % \vspace{-5mm}
\end{figure}

\subsubsection{Unknown Wind Field}
The quadrotor was also simulated in an unknown, gusty wind field (denoted SP-WG). The gusts are an additive disturbance $\boldsymbol{d}(\boldsymbol{\chi}) = [\boldsymbol{0}_{3 \times 3} \; \boldsymbol{d}_a(\boldsymbol{\chi}) \; \boldsymbol{0}_{6 \times 3}]^\top$, where 
$\boldsymbol{d}_a(\boldsymbol{\chi}) = \left( K_\Delta\frac{\|\boldsymbol{v}_r^*\|}{M}\boldsymbol{I}[v_{r,1}^* , v_{r,2}^*, v_{r,3}^*]^\top - \Delta_a(\boldsymbol{\chi})\right)[C_x, C_y, C_z]^\top$ with $\boldsymbol{v}_r^*=\boldsymbol{R}(\boldsymbol{v}_w + \boldsymbol{v}_G) - \boldsymbol{v}_q$ the relative-wind velocity vector considering both the known wind velocity $\boldsymbol{v}_w$ from the prior example and unknown gust velocity $\boldsymbol{v}_G$ generated using the model from \cite{davoudi2020quad}, with principal components $v_{r,1}^*$, $v_{r,2}^*$, and $v_{r,3}^*$. 
% In the presence of an unknown disturbance $\boldsymbol{d}(t,\boldsymbol{\chi})$, it is not guaranteed that the estimates of the drag coefficients $\hat{\boldsymbol{C}}_d$ converge to the set $\Omega$ but rather to a neighborhood of this set as specified in Theorem \ref{thm.robust_fxts}. 
Simulation parameters lead to $Y=3.65$ using \eqref{eq.Y_robust_adaptation} and $\Upsilon = 0.69$ from the wind model. 
% Thus, $\Upsilon < Y$, $D_0 = \mathbb R^p$, and the fixed-time bound $T$ is given by \eqref{eq.robust_fxts_T} plus the warm-up time $T_w$. 
The choice of $\lambda_{min}(\boldsymbol{\Gamma})=63.16$ satisfies $\lambda_{min}(\boldsymbol{\Gamma}) > 2(\frac{\Upsilon}{\underaccent{\bar}{\sigma}(\boldsymbol{M})})^2 = 26.36$, where $\underaccent{\bar}{\sigma}(\boldsymbol{M}, T - T_w)\geq 0.19$ for $t \geq T_w = 0.1$. Figures \ref{Fig: Quadrotor Trajectories Constant Wind} and \ref{fig.quadrotor_cbfs} show that the controller achieves safe, accurate tracking despite the parameter estimates not converging to the true values. It is believed that this is because the estimates capture the full effect of the uncertainty in the dynamics, i.e., $\Delta(\boldsymbol{\chi})\hat{\boldsymbol{C}}_d \approx \Delta(\boldsymbol{\chi})\boldsymbol{C}_d + d(t,\boldsymbol{\chi})$, though the analysis is left to future work.

%%%%%%%%%%%%%%%%%%%%%%%%%%%%%%%%%%%%%%%%%%%%%%%%%%%%%%%%%%%%%%%%%%%%%%%%%%%%%%%%
%********************************* Conclusion *********************************%
%%%%%%%%%%%%%%%%%%%%%%%%%%%%%%%%%%%%%%%%%%%%%%%%%%%%%%%%%%%%%%%%%%%%%%%%%%%%%%%%
\section{Conclusion}\label{sec: conclusion}
In this study on the merits of fixed-time adaptation for safe control under parametric model uncertainty, an adaptation law was introduced to learn additive, parametric uncertainty associated with a class of nonlinear, control-affine systems within a fixed time without requiring rank or persistence of excitation conditions on the regressor. The robustness of the proposed law to a class of bounded disturbances was analyzed, and a robust, adaptive CBF-based control law was synthesized after deriving a time-varying bound on the disturbance estimation error. A comparative study on a 2D single integrator system was conducted, and it was shown that the proposed approach succeeds where others from the literature fail. The value of the approach was further demonstrated on a safe trajectory-tracking problem using a quadrotor dynamical model in wind fields both known and unknown.

In the future, fixed-time adaptation for safe control of systems with nonlinearly parameterized and non-parametric additive disturbances to the system dynamics will be studied.

%***********************************
%********** More Details Here

% \begin{figure}
% \begin{center}
% \includegraphics[height=4cm]{jcaesar.eps}    % The printed column  
% \caption{Gaius Julius Caesar, 100--44 B.C.}  % width is 8.4 cm.
% \label{fig1}                                 % Size the figures 
% \end{center}                                 % accordingly.
% \end{figure}

% OR

%\begin{figure}
%\begin{center}
%\epsfig{file=jcaesar,width=7cm}
%\caption{Gaius Julius Caesar, 100--44 B.C.}
%\label{fig1}
%\end{center}
%\end{figure}

% Place Acknowledgements here
\begin{ack} 
The authors would like to acknowledge the support of the National Science Foundation award number 1931982.
\end{ack}

\bibliographystyle{plain}        % Include this if you use bibtex 
% \bibliography{refs_dronesims,refs_learningbasedcontrol,refs_parameterestimation,refs_safetycriticalsystems,refs_AdaptiveControl,refs_FTSandFxTS,refs_misc,myreferences}
\bibliography{myreferences}

\end{document}